\newtheorem{theorem}{Theorem}[section]
\newtheorem{proposition}{Proposition}[section]
\newtheorem{remark}{Remark}[section]
\newtheorem{lemma}{Lemma}[section]
\newtheorem{example}{Example}[section]
\newtheorem{corollary}{Corollary}[section]
\newcommand{\Mod}{ R{\rm {\bf -Mod}} }
\newcommand{\Ch}{ {\rm {\bf Ch}}(R{\rm {\bf -Mod}}) }
\begin{document}

\thispagestyle{empty}

\begin{center}
{\sc {\bf Degreewise $n$-projective and $n$-flat model structures on chain complexes } }
\end{center} 

\begin{center}
Marco A. P\'erez B. \\
Universit\'e du Qu\'ebec \`a Montr\'eal. \\
D\'epartement de Math\'ematiques. 
\end{center}

\begin{center} July 1st, 2012
\end{center}


\begin{abstract}
\noindent In \cite{Rada}, the authors construct an abelian model structure on the category  of chain complexes over left $R$-modules, $\Ch$, where the class of (trivially) cofibrant objects is given by the class of degreewise projective (resp. exact) chain complexes. Using a generalization of a well known theorem by I. Kaplansky, we generalize the method used in \cite{Rada} in order to obtain, for each integer $n > 0$, a new abelian model structure on the $\Ch$, where the class of (trivially) cofibrant objects is the class of (resp. exact) chain complexes whose terms have projective dimension at most $n$, provided the ring $R$ is right noetherian. We also give another method to construct this model structure, which also works to construct a model structure where the class of (trivially) cofibrant objects is given by the class of (resp. exact) chain complexes whose terms have flat dimension at most $n$.
\end{abstract}


\section{Introduction}

A cotorsion pair in an abelian category $\mathcal{C}$ is a pair $(\mathcal{A, B})$, where $\mathcal{A}$ and $\mathcal{B}$ are classes of objects of $\mathcal{C}$ such that they are orthogonal to each other with respect to the ${\rm Ext}$ functor. A model category is a bicomplete category with three classes of morphisms, called cofibrations, fibrations and weak equivalences, satisfying certain conditions. It turns out to be that these two notions have a deep connection. As far as the author knows, the first person who described this connection was M. Hovey in the paper {\it Cotorsion pairs, model category structures and representation theory}, where he proved that any two compatible and complete cotorsion pairs $(\mathcal{A}, \mathcal{B} \cap \mathcal{E})$ and $(\mathcal{A} \cap \mathcal{E}, \mathcal{B})$, in a bicomplete abelian category $\mathcal{C}$, give rise to a unique abelian model structure on $\mathcal{C}$ where $\mathcal{A}$ is the class of cofibrant objects, $\mathcal{B}$ is the class of fibrant objects, and $\mathcal{E}$ is the class of trivial objects.  \\

From this point there has been an increasing interest in constructing new model structures, specially on $\Ch$. One of the most influential researchers in this matter has been J. Gillespie, who has provided several results that allows us to induce cotorsion pairs in the category ${\rm Ch}(\mathcal{C})$ of chain complexes over an abelian category $\mathcal{C}$, from a certain cotorsion pair in $\mathcal{C}$. One of those results states that given a cotorsion pair $(\mathcal{A, B})$ in an abelian category $\mathcal{C}$ with enough projective and injective objects, there exist two cotorsion pairs in ${\rm Ch}(\mathcal{C})$ given by $({\rm dw}\widetilde{\mathcal{A}}, ({\rm dw}\widetilde{\mathcal{A}})^\perp)$ and $({\rm ex}\widetilde{\mathcal{A}}, ({\rm ex}\widetilde{\mathcal{A}})^\perp)$, where ${\rm dw}\widetilde{\mathcal{A}}$ is the class of chain complexes $X$ such that $X_m \in \mathcal{A}$ for every $m \in \mathbb{Z}$, and ${\rm ex}\widetilde{\mathcal{A}} = {\rm dw}\widetilde{\mathcal{A}} \cap \mathcal{E}$ where $\mathcal{E}$ is the class of exact complexes. As an example, if $\mathcal{P}_0$ denotes the class of projective modules in the category $\Mod$ of left $R$-modules, then the cotorsion pair $(\mathcal{P}_0, \Mod)$ induces two cotorsion pairs $({\rm dw}\widetilde{\mathcal{P}_0}, ({\rm dw}\widetilde{\mathcal{P}_0})^\perp)$ and $({\rm ex}\widetilde{\mathcal{P}_0}, ({\rm ex}\widetilde{\mathcal{P}_0})^\perp)$. In \cite{Rada}, the authors prove that these pairs are compatible and complete, with the help of a theorem by I. Kaplansky, namely that every projective module can be written as a direct sum of countably generated projective modules. Then, using \cite[Theorem 2.2]{Hovey2}, they get a new abelian model structure on $\Ch$ where the class of cofibrant objects is the class ${\rm dw}\widetilde{\mathcal{P}_0}$, which we shall call the class of {\bf degreewise projective complexes}. We shall refer to this model structure as the {\bf dw-projective model structure}. \\

In \cite{Aldrich} it is proven that if $\mathcal{P}_n$ denotes the class of left $R$-modules with projective dimension at most $n$, then $(\mathcal{P}_n, \mathcal{P}_n^\perp)$ is a complete and hereditary cotorsion pair. It follows we have two induced cotorsion pairs $({\rm dw}\widetilde{\mathcal{P}_n}, ({\rm dw}\widetilde{\mathcal{P}_n})^\perp)$ and $({\rm ex}\widetilde{\mathcal{P}_n}, ({\rm ex}\widetilde{\mathcal{P}_n})^\perp)$ in $\Ch$. Our goal is to prove that these two cotorsion pairs are complete for every $n > 0$, in order two obtain a new abelian model structure on $\Ch$ such that ${\rm dw}\widetilde{\mathcal{P}_n}$ is the class of cofibrant objects. \\

This paper is organized as follows. First, we recall some definitions and introduce the notation we shall use. Then, we shall give a \textquotedblleft generalization\textquotedblright \ of the Kaplansky Theorem in $\mbox{Mod-}R$ provided that $R$ is left noetherian. Specifically, we shall prove that every module of projective dimension $\leq n$ has a $\mathcal{P}_n^{\aleph_0}$-filtration, where $\mathcal{P}^{\aleph_0}_n$ is the set of all modules $M$ for which there exists an exact sequence \[ 0 \longrightarrow P_n \longrightarrow \cdots \longrightarrow P_1 \longrightarrow P_0 \longrightarrow M \longrightarrow 0 \] where $P_k$ is a countably generated projective module, for every $0 \leq k \leq n$. Using this result, we shall prove that $({\rm dw}\widetilde{\mathcal{P}_n}, ({\rm dw}\widetilde{\mathcal{P}_n})^\perp)$ and $({\rm ex}\widetilde{\mathcal{P}_n}, ({\rm ex}\widetilde{\mathcal{P}_n})^\perp)$ are complete cotorsion pairs. Then, we shall give another method to prove the previous result. The interesting thing of this other method, based on arguments appearing in the proof of \cite[Proposition 4.1]{Aldrich1}, is that it can be applied to show that $({\rm dw}\widetilde{\mathcal{F}_n}, ({\rm dw}\widetilde{\mathcal{F}_n})^\perp)$ and $({\rm ex}\widetilde{\mathcal{F}_n}, ({\rm ex}\widetilde{\mathcal{F}_n})^\perp)$ are complete cotorsion pairs, where $\mathcal{F}_n$ denotes the class of left $R$-modules having flat dimension at most $n$. At the end of the paper, we shall give some comments concerning the dw-$n$-projective and dw-$n$-flat model structures.


\section{Preliminaries}

This section is devoted to recall some notions and to introduce part of the notation we shall use throughout the paper. From now on, we work in the category $\Mod$ of left $R$-modules, and the category $\Ch$ of chain complexes over $\Mod$. Given a chain complex $X = (X_m)_{m \in \mathbb{Z}}$ with boundary maps $\partial^X_m : X_m \longrightarrow X_{m-1}$, we shall denote $Z_m(X) := {\rm Ker}(\partial^X_m)$. A chain complex $X$ is said to be {\bf exact} if $Z_m(X) = \partial_{m+1}(X_{m+1})$, for every $m \in \mathbb{Z}$. A chain complex $Y$ is said to be a {\bf subcomplex} of $X$ if there exists a monomorphism $i : Y \longrightarrow X$. Then we can define the {\bf quotient complex} $X / Y$ as the complex whose components are given by $(X / Y)_m = X_m / Y_m$ and whose boundary maps $\partial^{X / Y}_m : X_m / Y_m \longrightarrow X_{m-1} / Y_{m-1}$ are given by \[ x + Y_m \mapsto \partial^X_m(x) + Y_{m-1}. \]

Let $\mathcal{C}$ be either $\Mod$ or $\Ch$. Let $\mathcal{A}$ and $\mathcal{B}$ be two classes of objects in $\mathcal{C}$. The pair $(\mathcal{A, B})$ is called a {\bf cotorsion pair} in $\mathcal{C}$ if the following conditions are satisfied: 
\begin{itemize}
\item[ (1)] $\mathcal{A} = \mbox{}^{\perp}\mathcal{B} := \{ X \in {\rm Ob}(\mathcal{C}) \mbox{ / } {\rm Ext}^1(X, B) = 0 \mbox{ for every }B \in \mathcal{B} \}$.

\item[ (2)] $\mathcal{B} = \mathcal{A}^{\perp} := \{ X \in {\rm Ob}(\mathcal{C}) \mbox{ / } {\rm Ext}^1(A, X) = 0 \mbox{ for every }A \in \mathcal{A} \}$. 
\end{itemize} 

A cotorsion pair $(\mathcal{A, B})$ in $\mathcal{C}$ is said to be {\bf complete} if:
\begin{itemize}
\item[ (a)] $(\mathcal{A, B})$ has {\bf enough projectives}: for every object $X$ there exist objects $A \in \mathcal{A}$ and $B \in \mathcal{B}$, and a short exact sequence \[ 0 \longrightarrow B \longrightarrow A \longrightarrow X \longrightarrow 0. \]
\item[ (b)] $(\mathcal{A, B})$ has {\bf enough injectives}: for every object $X$ there exist objects $A' \in \mathcal{A}$ and $B' \in \mathcal{B}$, and a short exact sequence \[ 0 \longrightarrow X \longrightarrow B' \longrightarrow A' \longrightarrow 0. \]
\end{itemize}

A cotorsion pair $(\mathcal{A, B})$ is said to be {\bf cogenerated} by a set $\mathcal{S} \subseteq \mathcal{A}$ if $\mathcal{B} = \mathcal{S}^\perp$. There is a wide range of complete cotorsion pairs, thanks to the following result, known as the Eklof and Trlifaj Theorem. \\

\begin{theorem}{\rm \cite[Theorem 10]{Eklof}}\label{eklof} Every cotorsion pair in $\mathcal{C}$ cogenerated by a set is complete. \\
\end{theorem}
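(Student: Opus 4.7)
The plan is to prove the two halves of completeness (enough injectives and enough projectives) by constructing a special $\mathcal{B}$-preenvelope of an arbitrary object $X$ via a transfinite small-object-type argument, and then deducing the precover by a pullback trick. The key technical input is Eklof's Lemma, which asserts that the class $\mathcal{A} = {}^\perp \mathcal{B}$ is closed under transfinite extensions: if an object $M$ admits a continuous well-ordered chain $0 = M_0 \subseteq M_1 \subseteq \cdots \subseteq M_\lambda = M$ with every quotient $M_{\alpha+1}/M_\alpha \in \mathcal{S}$, then $M \in {}^\perp(\mathcal{S}^\perp) = \mathcal{A}$. I would prove this separately by transfinite induction on $\lambda$, reducing the non-vanishing of $\mathrm{Ext}^1(M, B)$ for $B \in \mathcal{B}$ to lifting compatible cocycles along the filtration, which works since each $\mathrm{Ext}^1(M_{\alpha+1}/M_\alpha, B) = 0$.

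For enough injectives, given $X$, I would construct a transfinite chain $X = X_0 \subseteq X_1 \subseteq \cdots$ as follows. At each successor step, let $I_\alpha$ index all pairs $(S, \xi)$ with $S \in \mathcal{S}$ and $\xi \in \mathrm{Ext}^1(S, X_\alpha)$; for each $\xi$ choose a representing extension $0 \to X_\alpha \to E_\xi \to S \to 0$, and form the pushout of all these extensions simultaneously to get $X_{\alpha+1}$. Then $X_{\alpha+1}/X_\alpha$ is a coproduct of copies of objects in $\mathcal{S}$, and by construction every element of $\mathrm{Ext}^1(S, X_\alpha)$ maps to zero in $\mathrm{Ext}^1(S, X_{\alpha+1})$. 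At limit ordinals, take the direct limit. Choose a regular cardinal $\kappa$ larger than a generator-bound for every $S \in \mathcal{S}$, and set $X_\infty := X_\kappa$. A cardinality argument shows that any extension of $S$ by $X_\infty$ already occurs at some earlier stage $X_\beta$ with $\beta < \kappa$, and is therefore killed by stage $\beta+1$, so $\mathrm{Ext}^1(S, X_\infty) = 0$ for all $S \in \mathcal{S}$, i.e.\ $X_\infty \in \mathcal{S}^\perp = \mathcal{B}$. Since $X_\infty / X$ inherits an $\mathcal{S}$-filtration from the construction, Eklof's Lemma yields $X_\infty / X \in \mathcal{A}$, and the short exact sequence $0 \to X \to X_\infty \to X_\infty/X \to 0$ is the required special $\mathcal{B}$-preenvelope.

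For enough projectives, I would choose a surjection $\pi : F \twoheadrightarrow X$ with $F$ a suitable large free object, apply the preenvelope construction to $K := \ker \pi$ to get $0 \to K \to B \to A \to 0$ with $A \in \mathcal{A}$ and $B \in \mathcal{B}$, and then take the pushout of $K \hookrightarrow F$ along $K \to B$. The resulting diagram produces a short exact sequence $0 \to B \to A' \to X \to 0$ in which $A'$ is an extension of $X$ (well, of $A$) by $F$, or more precisely of $A$ by $F$; since $\mathcal{A}$ is closed under extensions (as $\mathcal{A} = {}^\perp \mathcal{B}$) and $F \in \mathcal{A}$, we get $A' \in \mathcal{A}$, as desired.

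The main obstacle I anticipate is the cardinality/stabilization step: showing that a single regular cardinal $\kappa$ suffices so that every extension class with cocycle in $X_\infty$ is already trivialized before stage $\kappa$. This requires choosing $\kappa$ larger than the "size" (a generating cardinal, or in a locally presentable setting the presentability rank) of every $S \in \mathcal{S}$, and using that $\mathrm{Ext}^1(S, -)$ commutes with $\kappa$-directed colimits on objects of that rank. Once this is in place, Eklof's Lemma and the two pushout arguments are essentially formal.
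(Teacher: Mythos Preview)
The paper does not give its own proof of this theorem; it is simply quoted from the cited reference \cite{Eklof} and used as a black box throughout. So there is no in-paper argument to compare against.

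That said, your outline is essentially the standard Eklof--Trlifaj proof as it appears in \cite{Eklof} and in textbook treatments such as \cite{Gobel}: Eklof's Lemma to control the left-hand class, a transfinite ``killing extensions'' construction stopping at a sufficiently large regular cardinal to produce special $\mathcal{B}$-preenvelopes, and then Salce's pushout trick to obtain special $\mathcal{A}$-precovers. Your identification of the stabilisation step as the only genuinely delicate point is accurate; in the concrete categories $\mathcal{C} = R\text{-}\mathbf{Mod}$ or $\mathbf{Ch}(R\text{-}\mathbf{Mod})$ at hand, every object of the set $\mathcal{S}$ is $\kappa$-presentable for some $\kappa$, so the required commutation of $\mathrm{Ext}^1(S,-)$ with $\kappa$-filtered colimits holds. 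One small wording issue: in your precover step the pushout $A'$ sits in a short exact sequence $0 \to F \to A' \to A \to 0$, so it is an extension of $A$ by $F$, and closure of $\mathcal{A} = {}^{\perp}\mathcal{B}$ under extensions gives $A' \in \mathcal{A}$; your parenthetical already corrects this, but it would read more cleanly stated directly.
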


\begin{example} \
\begin{itemize}
\item[ (1)] If $\mathcal{P}_0$ denotes the class of projective modules, then $(\mathcal{P}_0, \Mod)$ is a cotorsion pair. Since every projective module is a direct summand of a free module, and $R$ is projective, one can show that $(\mathcal{P}_0, \Mod)$ is cogenerated by the set $\{ R \}$ and hence it is complete. 

\item[ (2)] Similarly, if $\mathcal{I}_0$ denotes the class of injective modules, then $(\Mod, \mathcal{I}_0)$ is a cotorsion pair. Using the Baer's Criterion, one can show that $(\Mod, \mathcal{I}_0)$ is cogenerated by the set of modules of the form $R/I$, where $I$ is a left ideal of $R$. So $(\Mod, \mathcal{I}_0)$ is a complete cotorsion pair. 

\item[ (3)] A less trivial example of a complete cotorsion pair is given by the {\bf flat cotorsion pair} $(\mathcal{F}_0, \mathcal{F}_0^\perp)$, where $\mathcal{F}_0$ is the class of flat modules. This result was proven by Edgard E. Enochs by using the Eklof and Trlifaj Theorem. Enochs proved that the pair $(\mathcal{F}_0, \mathcal{F}_0^\perp)$ is cogenerated by the set $\mathcal{S} = \{ S \in \mathcal{F}_0 \mbox{ : } {\rm Card}(S) \leq \kappa \}$, where $\kappa$ is an infinite cardinal with $\kappa \geq {\rm Card}(R)$. 

\item[ (4)] The following example is probably the most important cotorsion pair we shall consider in this paper, the pair $(\mathcal{P}_n, \mathcal{P}_n^\perp)$, where $\mathcal{P}_n$ is the class of modules which have projective dimension $\leq n$. Recall that a module $M$ has {\bf projective dimension} $\leq n$ is there exists an exact sequence \[ 0 \longrightarrow P_n \longrightarrow P_{n-1} \longrightarrow \cdots \longrightarrow P_1 \longrightarrow P_0 \longrightarrow M \longrightarrow 0, \] such that $P_k$ is a projective module, for every $0 \leq k \leq n$. Such a sequence is called a {\bf projective resolution of $M$ of length $n$}. We shall refer to the modules in $\mathcal{P}_n$ as {\bf $n$-projective modules}. In \cite{Aldrich}, the authors proved that $(\mathcal{P}_n, \mathcal{P}_n^\perp)$ is a cotorsion pair cogenerated by the set of all $n$-projective modules whose cardinality is less or equal than a given infinite cardinal $\kappa$ with $\kappa \geq {\rm Card}(R)$. \newpage

\item[ (5)] In a similar way, consider the class $\mathcal{F}_n$ of modules $M$ such that $M$ has flat dimension at most $n$, or equivalently, there is an exact sequence \[ 0 \longrightarrow F_n \longrightarrow \cdots \longrightarrow F_1 \longrightarrow F_0 \longrightarrow M \longrightarrow 0 \] where $F_k$ is a flat module, for every $0 \leq k \leq n$. This sequence is called a {\bf flat resolution of length $n$}. In \cite[Theorem 4.1.3]{Gobel}, it is proven that $(\mathcal{F}_n, \mathcal{F}^\perp_n)$ is a complete cotorsion pair. Here we shall give a easier proof of this fact. \\ 
\end{itemize}
\end{example}

Now we recall the notion of a model category. Given a category $\mathcal{C}$, a map $f$ in $\mathcal{C}$ is a {\bf retract} of a map $g$ in $\mathcal{C}$ if there is a commutative diagram of the form \\
\[ \begin{tikzpicture}
\matrix (m) [matrix of math nodes, row sep=2.5em, column sep=3em]
{ A & C & A \\ B & D & B \\ };
\path[-latex]
(m-1-1) edge (m-1-2) edge node[left] {$f$} (m-2-1)
(m-1-2) edge (m-1-3) edge node[left] {$g$} (m-2-2)
(m-1-3) edge node[right] {$f$} (m-2-3)
(m-2-1) edge (m-2-2)
(m-2-2) edge (m-2-3);
\end{tikzpicture} \]
where the horizontal composites are identities. Let $f : A \longrightarrow B$ and $g : C \longrightarrow D$ be two maps in $\mathcal{C}$. We shall say that $f$ has the {\bf left lifting property} with respect to $g$ (or that $g$ has the {\bf right lifting property} with respect to $f$) if for every pair of maps $u : A \longrightarrow C$ and $v : B \longrightarrow D$ with $g \circ u = v \circ f$, there exists a map $d : B \longrightarrow C$ such that $g \circ d = v$ and $d \circ f = u$. \\
\[ \begin{tikzpicture}
\matrix (m) [matrix of math nodes, row sep=1em, column sep=3em]
{ A & C & & & & A & C \\ & & \mbox{} & & \mbox{} & \\ B & D & & & & B & D \\ };
\path[-latex]
(m-1-1) edge node[left] {$f$} (m-3-1) edge node[above] {$u$} (m-1-2)
(m-1-6) edge node[left] {$f$} (m-3-6) edge node[above] {$u$} (m-1-7)
(m-3-1) edge node[below] {$v$} (m-3-2)
(m-1-2) edge node[right] {$g$} (m-3-2)
(m-3-6) edge node[below] {$v$} (m-3-7)
(m-1-7) edge node[right] {$g$} (m-3-7);
\path[dotted, ->]
(m-3-6) edge node[above, sloped] {$\exists \mbox{ } d$} (m-1-7);
\path [->, decoration={zigzag,segment length=4,amplitude=.9, post=lineto,post length=2pt},font=\scriptsize, line join=round] 
(m-2-3) edge[decorate] (m-2-5);
\end{tikzpicture} \]

A {\bf model category} is a bicomplete category $\mathcal{C}$ equipped with three classes of maps named {\bf cofibrations}, {\bf fibrations} and {\bf weak equivalences}, satisfying the following properties: 
\begin{itemize}
\item[ (1)] {\bf 3 for 2:} If $f$ and $g$ are maps of $\mathcal{C}$ such that $g\circ f$ is defined and two of $f$, $g$ and $g\circ f$ are weak equivalences, then so is the third.
\item[ (2)] If $f$ and $g$ are maps of $\mathcal{C}$ such that $f$ is a retract of $g$ and $g$ is a weak equivalence, cofibration, or fibration, then so is $f$.
\end{itemize}
Define a map to be a ${\bf trivial \ cofibration}$ if it is both a weak equivalence and a cofibration. Similarly, define a map to be a {\bf trivial fibration} if it is both a weak equivalence and a fibration.
\begin{itemize}
\item[ (3)] Trivial cofibrations have the left lifting property with respect to fibrations, and cofibrations have the left lifting property with respect to trivial fibrations.
\item[ (4)] Every map $f$ can be factored as $f = \alpha \circ \beta = \gamma \circ \delta$, where $\alpha$ (resp. $\delta$) is a cofibration (resp. fibration), and $\gamma$ (resp. $\beta$) is a trivial cofibration (resp. trivial fibration). \\\end{itemize} 

An object $X$ in $\mathcal{C}$ is called {\bf cofibrant} if the map $0 \longrightarrow X$ is a cofibration, {\bf fibrant} if the map $X \longrightarrow 1$ is a fibration, and {\bf trivial} if the map $0 \longrightarrow X$ is a weak equivalence, where $0$ and $1$ denote the initial and terminal objects of $\mathcal{C}$, respectively. \\

Given a bicomplete abelian category $\mathcal{C}$, a model structure on it is said to be {\bf abelian} if the following conditions are satisfied:
\begin{itemize}
\item[ (a)] A map is a cofibration if and only if it is a monomorphism with cofibrant cokernel.

\item[ (b)] A map if a fibration if and only if it is an epimorphism with fibrant kernel. 
\end{itemize}


\section{Degreewise $n$-projective complexes}

We begin this section with the notion of a filtration. Let $\mathcal{C}$ be either $\Mod$ or $\Ch$. Given an object $X \in \mathcal{C}$, by a {\bf filtration} of $X$ indexed by an ordinal $\lambda$ we shall mean a family $(X^\alpha \mbox{ : } \alpha < \lambda)$ of subobjects of $X$ such that:
\begin{itemize}
\item[ (1)] $X = \bigcup_{\alpha < \lambda} X_\alpha$.

\item[ (3)] $X^\alpha$ is a subobject of $X^{\alpha'}$ whenever $\alpha \leq \alpha'$.

\item[ (4)] $X^\beta = \bigcup_{\alpha < \beta} X^\alpha$ for any limit ordinal $\beta < \lambda$.
\end{itemize}
If $\mathcal{S}$ is some class of objects in $\Ch$, we say that a filtration $(X^\alpha \mbox{ : } \alpha < \lambda)$ of $X$ is a {\bf $\mathcal{S}$-filtration} if for each $\alpha + 1 < \lambda$ we have that $X_0$ and $X^{\alpha + 1} / X^\alpha$ are isomorphic to an element of $\mathcal{S}$. \\ 

The construction of the model structure given in \cite{Rada} is based on a theorem by I. Kaplansky, namely: 

\begin{theorem}[Kaplansky's Theorem] If $P$ is a projective module then $P$ is a direct sum of countable generated projective modules. \\
\end{theorem}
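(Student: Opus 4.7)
The natural plan is to reduce the statement to decomposing free modules compatibly with a given direct summand. Since $P$ is projective, choose $Q$ so that $F := P \oplus Q$ is free, with a fixed basis $B$. Let $\pi_P, \pi_Q : F \longrightarrow F$ denote the projections onto $P$ and $Q$ respectively. If I can write $F$ as a direct sum of countably generated free submodules $F_\gamma$ each satisfying $F_\gamma = (F_\gamma \cap P) \oplus (F_\gamma \cap Q)$, then projecting to $P$ will yield $P = \bigoplus_\gamma (F_\gamma \cap P)$ with each summand countably generated projective (as a direct summand of a countably generated free module). So the entire task reduces to producing such a decomposition of $F$.

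The key technical device is a \emph{countable closure} lemma: any countable subset $S \subseteq B$ is contained in a countable subset $B' \subseteq B$ that is $\{P,Q\}$-closed, meaning that for each $b \in B'$, the (finite) basis expansions of $\pi_P(b)$ and $\pi_Q(b)$ involve only elements of $B'$. I would build $B'$ by iterating $\omega$ times: at step $n+1$, enlarge $B_n$ by the finitely many basis elements appearing in $\pi_P(b)$ and $\pi_Q(b)$ for $b \in B_n$, and take $B' = \bigcup_{n<\omega} B_n$. Once $B'$ is $\{P,Q\}$-closed, the free submodule $F_{B'} := \langle B' \rangle$ is stable under $\pi_P$ and $\pi_Q$, so $F_{B'} = (F_{B'} \cap P) \oplus (F_{B'} \cap Q)$.

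With the lemma in hand, I would well-order $B = (b_\beta : \beta < \mu)$ and construct transfinitely a continuous chain $(B_\alpha)_{\alpha \le \lambda}$ of $\{P,Q\}$-closed subsets of $B$ exhausting $B$, where at each successor step I apply the closure lemma to $B_\alpha \cup \{b_\beta\}$ for the least index $\beta$ with $b_\beta \notin B_\alpha$, and take unions at limits. Writing $F_\alpha := \langle B_\alpha \rangle$, $P_\alpha := F_\alpha \cap P$, $Q_\alpha := F_\alpha \cap Q$, each $F_\alpha$ respects the decomposition, and the quotient $F_{\alpha+1}/F_\alpha$ is a countably generated free module which inherits the splitting $F_{\alpha+1}/F_\alpha = (P_{\alpha+1}/P_\alpha) \oplus (Q_{\alpha+1}/Q_\alpha)$. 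In particular each $P_{\alpha+1}/P_\alpha$ is countably generated projective, and the short exact sequence $0 \to P_\alpha \to P \to P/P_\alpha \to 0$ splits because $P/P_\alpha$ embeds as a direct summand of the free module $F/F_\alpha$. A standard transfinite induction choosing splittings at each successor ordinal and passing to unions at limits then produces $P = \bigoplus_{\alpha < \lambda} C_\alpha$ with $C_\alpha \cong P_{\alpha+1}/P_\alpha$ countably generated projective, as required.

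The main obstacle I expect is the bookkeeping in the closure lemma, specifically verifying that $\{P,Q\}$-closure of $B_\alpha$ really does force $F_\alpha = P_\alpha \oplus Q_\alpha$ (with $P_\alpha$ and $Q_\alpha$ defined as intersections), and showing that the whole transfinite chain can be assembled coherently — in particular that the splittings chosen at successor stages are compatible at limits, which is the delicate point that distinguishes a mere filtration from an actual direct sum decomposition.
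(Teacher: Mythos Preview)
The paper does not actually prove this theorem; it is stated as a classical result of Kaplansky and cited to \cite{Kaplansky} without proof, serving only as the base case and motivation for the paper's own Lemma~3.2 (the $n$-projective generalization). So there is no ``paper's own proof'' to compare against here.

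That said, your proposal is correct and is precisely Kaplansky's original argument: embed $P$ as a summand of a free module $F$ with basis $B$, build a continuous well-ordered chain of countable $\{P,Q\}$-closed subsets of $B$ exhausting $B$, and read off the decomposition of $P$ from the successive quotients. Your worry about compatibility of splittings at limit ordinals is largely unfounded: once you have chosen, for each successor $\alpha+1$, a complement $C_\alpha$ with $P_{\alpha+1} = P_\alpha \oplus C_\alpha$, continuity of the filtration $(P_\alpha)$ gives $P_\beta = \bigcup_{\gamma < \beta} P_\gamma = \bigcup_{\gamma < \beta} \bigoplus_{\alpha < \gamma} C_\alpha = \bigoplus_{\alpha < \beta} C_\alpha$ at every limit $\beta$ automatically, with no further choices needed. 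The only genuine check is that $\{P,Q\}$-closure of $B_\alpha$ forces $F_{B_\alpha} = (F_{B_\alpha} \cap P) \oplus (F_{B_\alpha} \cap Q)$, and that follows immediately because closure makes $F_{B_\alpha}$ invariant under both idempotents $\pi_P, \pi_Q$.
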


So when one thinks of a possible generalization of the dw-projective model structure for $n$-projective modules, a good question would be if it is possible to generalize the Kaplansky's Theorem for such modules. Let $M \in \mathcal{P}_n$ be an $n$-projective module: \[ 0 \longrightarrow P_n \longrightarrow \cdots \longrightarrow P_1 \longrightarrow P_0 \longrightarrow M \longrightarrow 0. \] By Kaplansky's Theorem we can write $P_k = \bigoplus_{i \in I_k} P^i_k$, where $P^i_k$ is a countably generated projective module, for every $i \in I_k$ and every $0 \leq k \leq n$. Then we can rewrite the previous resolution as \[ 0 \longrightarrow \bigoplus_{i \in I_n} P^i_n \longrightarrow \bigoplus_{i \in I_{n-1}} P^i_{n-1} \longrightarrow \cdots \longrightarrow \bigoplus_{i \in I_1} P^i_1 \longrightarrow \bigoplus_{i \in I_0} P^i_0 \longrightarrow M \longrightarrow 0. \] From now on we shall write any projective resolution of length $n$ by using such direct sum decompositions. We shall denote by $\mathcal{P}_n^{\aleph_0}$ the set of all modules $M$ having a projective resolution as above, where $I_k$ is a countable set for each $0 \leq k \leq n$. \\

For any class of modules $\mathcal{A}$, we denote by ${\rm dw}\widetilde{\mathcal{A}}$ (resp. ${\rm ex}\widetilde{\mathcal{A}}$) the class of (resp. exact) chain complexes such that each term belongs to $\mathcal{A}$. We shall prove that $({\rm dw}\widetilde{\mathcal{P}_n}, ({\rm dw}\widetilde{\mathcal{P}_n})^\perp)$ is a cotorsion pair cogenerated by the set ${\rm dw}\widetilde{\mathcal{P}_n^{\aleph_0}}$. We shall name ${\rm dw}\widetilde{\mathcal{P}_n}$ the class of {\bf dw-$n$-projective chain complexes}. The fact that $({\rm dw}\widetilde{\mathcal{P}_n}, ({\rm dw}\widetilde{\mathcal{P}_n})^\perp)$ is a cotorsion pair in $\Ch$ is a consequence of the following result (which is proven by its author for any abelian category): 

\begin{proposition}{\rm \cite[Proposition 3.2]{Gillespie}} Let $(\mathcal{A, B})$ be a cotorsion pair in $\Mod$. Then $({\rm dw}\widetilde{\mathcal{A}}, ({\rm dw}\widetilde{\mathcal{A}})^\perp)$ is a cotorsion pair in $\Ch$. \\
\end{proposition}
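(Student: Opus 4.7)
The plan is to verify the two defining conditions of a cotorsion pair for $({\rm dw}\widetilde{\mathcal{A}}, ({\rm dw}\widetilde{\mathcal{A}})^\perp)$. The axiom $\mathcal{B} = \mathcal{A}^\perp$ is automatic, since the second class of the pair is literally defined as $({\rm dw}\widetilde{\mathcal{A}})^\perp$. For the axiom $\mathcal{A} = \mbox{}^\perp\mathcal{B}$, the inclusion ${\rm dw}\widetilde{\mathcal{A}} \subseteq \mbox{}^\perp(({\rm dw}\widetilde{\mathcal{A}})^\perp)$ is immediate from the definition of $\perp$, so the real content is the reverse inclusion: if $X$ satisfies $\mathrm{Ext}^1_{\rm Ch}(X, Y) = 0$ for every $Y \in ({\rm dw}\widetilde{\mathcal{A}})^\perp$, then every term $X_m$ lies in $\mathcal{A}$.

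The technical tool I would use is the disk complex: for $m \in \mathbb{Z}$ and $M \in \Mod$, let $D^{m+1}(M)$ denote the chain complex with $M$ in degrees $m+1$ and $m$, the identity as the boundary between them, and zero elsewhere. The key claim is that for every chain complex $X$ and every module $M$,
\[ \mathrm{Ext}^1_{\rm Ch}(X, D^{m+1}(M)) \cong \mathrm{Ext}^1_R(X_m, M). \]
The starting point is the natural isomorphism $\mathrm{Hom}_{\rm Ch}(X, D^{m+1}(M)) \cong \mathrm{Hom}_R(X_m, M)$: a chain map $f : X \to D^{m+1}(M)$ is zero outside degrees $m+1$ and $m$, the component $f_m$ is an arbitrary map $X_m \to M$, and $f_{m+1}$ is forced to equal $f_m \circ \partial^X_{m+1}$ (the remaining boundary condition becomes $\partial \circ \partial = 0$ on $X$). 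This exhibits $D^{m+1}(-)$ as a right adjoint to the exact evaluation functor $(-)_m : \Ch \to \Mod$. Since $(-)_m$ is exact, $D^{m+1}(-)$ preserves injectives, and computing $\mathrm{Ext}^1_{\rm Ch}(X, D^{m+1}(M))$ via an injective resolution $M \to I^\bullet$ in $\Mod$ (which $D^{m+1}$ sends to an injective resolution of $D^{m+1}(M)$ in $\Ch$) upgrades the $\mathrm{Hom}$-identity to the claimed $\mathrm{Ext}^1$-identity.

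With the identity in hand, the reverse inclusion is formal. Fix $X \in \mbox{}^\perp(({\rm dw}\widetilde{\mathcal{A}})^\perp)$, an index $m \in \mathbb{Z}$, and any $B \in \mathcal{B}$. For every $Z \in {\rm dw}\widetilde{\mathcal{A}}$ we have $Z_m \in \mathcal{A}$, whence $\mathrm{Ext}^1_{\rm Ch}(Z, D^{m+1}(B)) \cong \mathrm{Ext}^1_R(Z_m, B) = 0$; so $D^{m+1}(B) \in ({\rm dw}\widetilde{\mathcal{A}})^\perp$. The hypothesis on $X$ then yields $\mathrm{Ext}^1_R(X_m, B) \cong \mathrm{Ext}^1_{\rm Ch}(X, D^{m+1}(B)) = 0$. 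Varying $B$ over $\mathcal{B}$ shows $X_m \in \mbox{}^\perp\mathcal{B} = \mathcal{A}$, and varying $m$ over $\mathbb{Z}$ gives $X \in {\rm dw}\widetilde{\mathcal{A}}$.

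The main obstacle is the rigorous proof of the $\mathrm{Ext}^1$-identity: one must fix index conventions for the disk complex, verify that $D^{m+1}$ is exact and preserves injectives, and check that the induced $\mathrm{Ext}^1$-computations on both sides really match. Once the identity is in place, the remainder of the argument is a short formal orthogonality manipulation.
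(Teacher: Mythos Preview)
Your proof is correct. The paper does not supply its own proof of this proposition but simply cites it from Gillespie; your disk-complex argument, based on the adjunction $\mathrm{Hom}_{\rm Ch}(X, D^{m+1}(M)) \cong \mathrm{Hom}_R(X_m, M)$ and its derived $\mathrm{Ext}^1$ version, is precisely the standard method and matches the approach in the cited source.
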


We shall prove that every dw-$n$-projective complex has a ${\rm dw}\widetilde{\mathcal{P}_n^{\aleph_0}}$-filtration. Then the completeness of $({\rm dw}\widetilde{\mathcal{P}_n}, ({\rm dw}\widetilde{\mathcal{P}_n})^\perp)$ shall be a consequence of Theorem \ref{eklof} and the following result: 

\begin{proposition}\label{propo} Let $(\mathcal{A, B})$ be a cotorsion pair in $\mathcal{C} = \Mod, \Ch$ and let $\mathcal{S} \subseteq \mathcal{A}$ be a set of objects of $\mathcal{C}$. If every $A \in \mathcal{A}$ has a $\mathcal{S}$-filtration, then $(\mathcal{A, B})$ is cogenerated by $\mathcal{S}$. \\
\end{proposition}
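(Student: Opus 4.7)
The plan is to show that $\mathcal{B} = \mathcal{S}^\perp$, which, combined with the identity $\mathcal{B} = \mathcal{A}^\perp$ coming from the cotorsion-pair hypothesis, is exactly the assertion that $(\mathcal{A}, \mathcal{B})$ is cogenerated by $\mathcal{S}$. One inclusion is automatic: since $\mathcal{S} \subseteq \mathcal{A}$, every object orthogonal to all of $\mathcal{A}$ is in particular orthogonal to $\mathcal{S}$, so $\mathcal{B} = \mathcal{A}^\perp \subseteq \mathcal{S}^\perp$. The content of the proposition is therefore the reverse inclusion $\mathcal{S}^\perp \subseteq \mathcal{A}^\perp$, which is a version of Eklof's Lemma.

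To establish it I would fix $X \in \mathcal{S}^\perp$ and $A \in \mathcal{A}$, and show that any short exact sequence $0 \longrightarrow X \longrightarrow E \longrightarrow A \longrightarrow 0$ splits, which gives $\mathrm{Ext}^1(A, X) = 0$. Using the given $\mathcal{S}$-filtration $(A^\alpha : \alpha < \lambda)$ of $A$, I would pull the extension back along each inclusion $A^\alpha \hookrightarrow A$ to obtain a coherent family of subextensions $0 \longrightarrow X \longrightarrow E^\alpha \longrightarrow A^\alpha \longrightarrow 0$, and then construct splittings $s^\alpha : A^\alpha \to E^\alpha$ by transfinite recursion so that $s^{\alpha'} |_{A^\alpha} = s^\alpha$ for $\alpha \leq \alpha'$. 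Taking the union at $\lambda$ then yields a splitting of the original extension.

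The limit-ordinal step is painless: since the filtration is continuous, $A^\beta = \bigcup_{\alpha<\beta} A^\alpha$ and likewise $E^\beta = \bigcup_{\alpha<\beta} E^\alpha$, and the compatible splittings $s^\alpha$ glue to a splitting $s^\beta$. The successor step is where the hypothesis on $X$ enters. Given $s^\alpha$, the quotient extension
\[
0 \longrightarrow X \longrightarrow E^{\alpha+1}/s^\alpha(A^\alpha) \longrightarrow A^{\alpha+1}/A^\alpha \longrightarrow 0
\]
splits because $A^{\alpha+1}/A^\alpha$ is isomorphic to an element of $\mathcal{S}$ and $X \in \mathcal{S}^\perp$; lifting this splitting back into $E^{\alpha+1}$ produces the desired extension $s^{\alpha+1}$ of $s^\alpha$. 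The base case $\alpha = 0$ is handled identically, using that $A^0$ itself lies in $\mathcal{S}$.

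The main nuisance is verifying the book-keeping at limit ordinals: one must check that pulling back along the union of a continuous chain of subobjects produces the union of the pulled-back subextensions, so that the family $(E^\alpha)$ really is a continuous filtration of $E$. This reduces to the fact that pullbacks commute with directed colimits of monomorphisms in $\Mod$ and in $\Ch$, which is standard in any Grothendieck abelian category. Since the rest of the argument uses only short exact sequences and transfinite composition of monomorphisms, the proof runs identically in $\mathcal{C} = \Mod$ and in $\mathcal{C} = \Ch$.
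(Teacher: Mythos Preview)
Your proof is correct and follows the same strategy as the paper: reduce to showing $\mathcal{S}^\perp \subseteq \mathcal{B}$, then verify $\mathrm{Ext}^1(A,X)=0$ for arbitrary $A\in\mathcal{A}$ and $X\in\mathcal{S}^\perp$ using the given $\mathcal{S}$-filtration of $A$. The only difference is packaging: the paper cites Eklof's Lemma (stated just before the proposition) as a black box, whereas you reprove it inline via the standard transfinite construction of compatible splittings.
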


Before proving this, we need the following result known as the Eklof's Lemma. For a proof of this we refer the reader to \cite[Lemma 3.1.2]{Gobel} or \cite[Theorem 7.3.4]{Enochs}. 

\begin{lemma}[Eklof's Lemma] In $\mathcal{C} = \Mod, \Ch$ let $A$ and $B$ be two objects. If $A$ has a $\mbox{}^\perp\{ B \}$-filtration, then $A \in \mbox{}^\perp\{B\}$. \\
\end{lemma}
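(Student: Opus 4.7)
The goal is to show that any short exact sequence $0 \longrightarrow B \longrightarrow E \xrightarrow{p} A \longrightarrow 0$ splits, so that $\mathrm{Ext}^1(A, B) = 0$ and hence $A \in {}^\perp\{B\}$. Equivalently, I will construct a retraction $r : E \longrightarrow B$ with $r|_B = \mathrm{id}_B$, assembled by transfinite recursion along the given ${}^\perp\{B\}$-filtration $(A^\alpha \mbox{ : } \alpha < \lambda)$ of $A$.

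The first step is to lift the filtration from $A$ to $E$ by setting $E^\alpha := p^{-1}(A^\alpha)$. Pulling back the given short exact sequence yields $0 \longrightarrow B \longrightarrow E^\alpha \longrightarrow A^\alpha \longrightarrow 0$ for each $\alpha$; the $E^\alpha$ form an ascending chain with $E^\beta = \bigcup_{\alpha < \beta} E^\alpha$ at limit ordinals $\beta < \lambda$; and the quotients satisfy $E^{\alpha+1}/E^\alpha \cong A^{\alpha+1}/A^\alpha$. I will construct compatible retractions $r_\alpha : E^\alpha \longrightarrow B$ (so $r_\alpha|_B = \mathrm{id}_B$ and $r_{\alpha'}|_{E^\alpha} = r_\alpha$ whenever $\alpha \leq \alpha'$) by recursion on $\alpha$, and finally set $r := \bigcup_{\alpha < \lambda} r_\alpha$.

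For the base case, since $A^0 \in {}^\perp\{B\}$ one has $\mathrm{Ext}^1(A^0, B) = 0$, so the sequence $0 \longrightarrow B \longrightarrow E^0 \longrightarrow A^0 \longrightarrow 0$ splits and a retraction $r_0$ exists. At a limit ordinal $\alpha$, the compatibility of $(r_\beta)_{\beta < \alpha}$ makes $r_\alpha := \bigcup_{\beta < \alpha} r_\beta$ a well-defined morphism (in $\Ch$ this is verified degreewise). The crucial case is the successor step: given $r_\alpha : E^\alpha \longrightarrow B$, apply $\mathrm{Hom}(-, B)$ to the short exact sequence
\[ 0 \longrightarrow E^\alpha \longrightarrow E^{\alpha+1} \longrightarrow A^{\alpha+1}/A^\alpha \longrightarrow 0 \]
to obtain the exact sequence
\[ \mathrm{Hom}(E^{\alpha+1}, B) \longrightarrow \mathrm{Hom}(E^\alpha, B) \longrightarrow \mathrm{Ext}^1(A^{\alpha+1}/A^\alpha, B). \]
Since $A^{\alpha+1}/A^\alpha \in {}^\perp\{B\}$, the rightmost group vanishes, so the restriction map is surjective and $r_\alpha$ lifts to some $r_{\alpha+1} : E^{\alpha+1} \longrightarrow B$ with $r_{\alpha+1}|_{E^\alpha} = r_\alpha$.

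The main subtlety lies in this successor step, because the long exact sequence only produces a morphism extending $r_\alpha$, not obviously a retraction. The key observation is that being a retraction is a condition only on the restriction to $B$, and $B \subseteq E^\alpha$; hence $r_{\alpha+1}|_B = r_\alpha|_B = \mathrm{id}_B$ is automatic. Taking $r = \bigcup_\alpha r_\alpha$ then yields a global retraction $E \longrightarrow B$, which produces the splitting of the original sequence and shows $\mathrm{Ext}^1(A, B) = 0$. The argument is uniform in $\mathcal{C} = \Mod$ and $\mathcal{C} = \Ch$, since both are abelian categories carrying the long exact sequence of $\mathrm{Ext}$.
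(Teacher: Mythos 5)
Your argument is correct, and it is essentially the standard proof of Eklof's Lemma: the paper itself does not prove this statement but defers to \cite[Lemma 3.1.2]{Gobel} and \cite[Theorem 7.3.4]{Enochs}, where the same strategy is used — pull the filtration back along $E \twoheadrightarrow A$ and extend a splitting map by transfinite recursion, the successor step resting on the surjectivity of ${\rm Hom}(E^{\alpha+1}, B) \longrightarrow {\rm Hom}(E^{\alpha}, B)$ coming from ${\rm Ext}^1(A^{\alpha+1}/A^{\alpha}, B) = 0$. Your observation that the extended map remains a retraction because $B \subseteq E^{\alpha}$ correctly handles the only delicate point, and the argument does carry over verbatim to $\Ch$.
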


\begin{proof}[{\rm {\bf Proof of Proposition \ref{propo}:}}] Consider the cotorsion pair $(\mbox{}^\perp(\mathcal{S}^\perp), \mathcal{S}^\perp)$. We shall show that $(\mbox{}^\perp({S}^\perp), \mathcal{S}^\perp) = (\mathcal{A, B})$. It suffices to show that $\mathcal{B} = \mathcal{S}^\perp$, since this equality implies $\mathcal{A} = \mbox{}^\perp\mathcal{B} = \mbox{}^\perp(\mathcal{S}^\perp)$. Since $\mathcal{S} \subseteq \mathcal{A}$, we have $\mathcal{B} = \mathcal{A}^\perp \subseteq \mathcal{S}^\perp$. Now let $Y \in \mathcal{S}^\perp$, $A \in \mathcal{A}$ and let $(A_\alpha \mbox{ : } \alpha < \lambda)$ be an $\mathcal{S}$-filtration of $A$. We have 
\begin{align*}
{\rm Ext}^1(A_0, Y) & = {\rm Ext}^1(0, Y) = 0, \\
{\rm Ext}^1(A_{\alpha + 1} / A_\alpha, Y) & = , \mbox{ whenever $\alpha + 1 < \lambda$},
\end{align*}
since $A_0$ and $A_{\alpha + 1} / A_\alpha$ are isomorphic to objects in $\mathcal{C}$. Then $(A_\alpha \mbox{ : } \alpha < \lambda)$ is a $\mbox{}^\perp\{Y\}$-filtration of $A$. By the Eklof's Lemma, we have ${\rm Ext}^1(A, Y) = 0$, i.e. $Y \in \mathcal{A}^{\perp} = \mathcal{B}$ since $A$ is any module in $\mathcal{A}$. Hence $\mathcal{S}^\perp \subseteq \mathcal{B}$. \\
\end{proof}

In order to construct ${\rm dw}\widetilde{\mathcal{P}_n^{\aleph_0}}$-filtrations of dw-$n$-projective complexes, we need the following generalization of the Kaplansky's Theorem: 

\begin{lemma}[Kaplansky's Theorem fon $n$-projective modules]\label{Kaplansky} Let $R$ be a noetherian ring. Let $M \in \mathcal{P}_n$ and let $N$ be a countably generated submodule of $M$. Then there exists a $\mathcal{P}^{\aleph_0}_n$-filtration of $M$, say $(M_\alpha : \alpha < \lambda)$  with $\lambda > 1$, such that $M_1 \in \mathcal{P}^{\aleph_0}_n$ and $N \subseteq M_1$. 
\end{lemma}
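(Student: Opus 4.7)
The strategy is to build the filtration by a transfinite recursion in which the hard work is concentrated in constructing a single successor element from countably generated data. Fix a projective resolution
\[
0 \longrightarrow P_n \longrightarrow \cdots \longrightarrow P_1 \longrightarrow P_0 \stackrel{\epsilon}{\longrightarrow} M \longrightarrow 0
\]
with boundary maps $d_k : P_k \longrightarrow P_{k-1}$, and decompose each $P_k = \bigoplus_{i \in I_k} P_k^i$ into countably generated projectives via the original Kaplansky theorem. I will select, by transfinite recursion on an ordinal $\alpha$, increasing chains of countable subsets $J_k^{(\alpha)} \subseteq I_k$ for $0 \leq k \leq n$, so that the subcomplex $Q_\bullet^{(\alpha)}$ with $Q_k^{(\alpha)} := \bigoplus_{i \in J_k^{(\alpha)}} P_k^i$ forms a projective resolution of length $n$ of $M_\alpha := \epsilon(Q_0^{(\alpha)})$.

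The central step is the construction of $M_1 \supseteq N$ in $\mathcal{P}_n^{\aleph_0}$. Start with a countable set of generators of $N$, lift them to $P_0$, and collect in a countable set $J_0 \subseteq I_0$ all coordinates involved in these lifts. Now I must enlarge $J_0$ and choose countable $J_k \subseteq I_k$ for $1 \leq k \leq n$ so that two closure conditions hold at each level: $(a)$ $d_k(Q_k) \subseteq Q_{k-1}$, and $(b)$ $\ker(d_{k-1}) \cap Q_{k-1} \subseteq d_k(Q_k)$, where $d_0 := \epsilon$. Condition $(a)$ is easy: the images under $d_k$ of the countably many generators of $Q_k$ involve only countably many coordinates in $P_{k-1}$, which I add to $J_{k-1}$. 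Condition $(b)$ is where the noetherian hypothesis enters: $Q_{k-1}$ is a countable direct sum of countably generated modules, hence countably generated, and over a noetherian ring every submodule of a countably generated module is countably generated; so $\ker(d_{k-1}) \cap Q_{k-1}$ is countably generated, and its generators lift through $d_k$ into countably many summands of $P_k$, which I add to $J_k$. Since enforcing $(b)$ at level $k$ enlarges $J_k$ and may spoil $(a)$ at level $k+1$, I run a back-and-forth of length $\omega$ across all $n+1$ levels and take countable unions; the resulting $J_k$'s are countable and satisfy both conditions. Then $Q_\bullet$ is a projective resolution of length $n$ of $M_1 := \epsilon(Q_0) \supseteq N$ by countably generated projectives, so $M_1 \in \mathcal{P}_n^{\aleph_0}$.

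For the transfinite iteration, at a successor stage $\alpha+1$ with $M_\alpha \neq M$ pick any $x \in M \setminus M_\alpha$, lift it to $P_0$, add its finitely many new coordinates to $J_0^{(\alpha)}$, and repeat the same back-and-forth starting from these enlarged sets, producing $J_k^{(\alpha+1)}$ with countable difference from $J_k^{(\alpha)}$. At a limit ordinal $\beta$ take $J_k^{(\beta)} := \bigcup_{\alpha < \beta} J_k^{(\alpha)}$; the closure conditions pass to the union and exactness of $Q_\bullet^{(\beta)}$ follows since direct limits of exact sequences are exact. The recursion terminates at some $\lambda$ when $\bigcup_{\alpha < \lambda} J_k^{(\alpha)} = I_k$ for all $k$, so $\bigcup_\alpha M_\alpha = M$. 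For each successor pair the inclusion $Q_\bullet^{(\alpha)} \hookrightarrow Q_\bullet^{(\alpha+1)}$ is termwise split, so the quotient complex $Q_\bullet^{(\alpha+1)}/Q_\bullet^{(\alpha)} \cong \bigoplus_{i \in J_k^{(\alpha+1)} \setminus J_k^{(\alpha)}} P_k^i$ is exact by the long exact sequence in homology; it thus provides a projective resolution of $M_{\alpha+1}/M_\alpha$ of length $n$ with countably generated terms, placing $M_{\alpha+1}/M_\alpha$ in $\mathcal{P}_n^{\aleph_0}$ as required for a $\mathcal{P}_n^{\aleph_0}$-filtration.

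The principal obstacle is the simultaneous interleaving of conditions $(a)$ and $(b)$ across all $n+1$ levels of the resolution: satisfying $(b)$ at a given level can reopen $(a)$ at the next, so a one-pass closure does not suffice. Nevertheless, the noetherian hypothesis keeps every intermediate enlargement countable, so an $\omega$-length back-and-forth converges and the whole construction remains quantitatively under control.
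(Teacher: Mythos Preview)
Your proposal is correct and follows essentially the same approach as the paper: both fix a Kaplansky decomposition of each term of a length-$n$ projective resolution, run an $\omega$-length closure procedure (your back-and-forth between conditions $(a)$ and $(b)$ is exactly the paper's ``zig-zag procedure'') using the noetherian hypothesis to keep kernels countably generated, and then iterate transfinitely by passing to quotient resolutions at successor stages. The only cosmetic difference is that at successor stages the paper explicitly works in the quotient resolution of $M/M_\alpha$, whereas you phrase it as enlarging the $J_k^{(\alpha)}$ within the original resolution; these are equivalent since the termwise splitting identifies the two pictures.
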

\begin{proof} Let $M \in \mathcal{P}_n$ and let \[ 0 \longrightarrow \bigoplus_{i \in I_n} P^i_n \longrightarrow \bigoplus_{i \in I_{n-1}} P^i_{n-1} \longrightarrow \cdots \longrightarrow \bigoplus_{i \in I_1} P^i_1 \longrightarrow \bigoplus_{i \in I_0} P^i_0 \longrightarrow M \longrightarrow 0 \] be a projective resolution of $M$. We shall construct a $\mathcal{P}^{\aleph_0}_n$-filtration $(M_\alpha \mbox{ : } \alpha < \lambda)$ of $M$, with $N \subseteq M_1$, by using transfinite induction. For $\alpha = 0$ set $M_0 = 0$. Now we construct $M_1$. Let $\mathcal{G}$ be a countable set of generators of $N$. Since $f_0$ is surjective, for every $g \in \mathcal{G}$ we can choose $y_g \in \bigoplus_{i \in I_0} P^i_0$ such that $g = f_0(y_g)$. Consider the set $Y = \{ y_g \mbox{ : } g \in \mathcal{G} \}$. Since $Y$ is a countable subset of $\bigoplus_{i \in I_0} P^i_0$, we have that $\left< Y \right>$ is a countably generated submodule of $P_0$. Choose a countable subset $I_0^{1, 0} \subseteq I_0$ such that $\left< Y \right> \subseteq \bigoplus_{i \in I^{1, 0}_0} P^i_0$. Then $f_0\left( \left< Y \right> \right) \subseteq N$. Consider ${\rm Ker}\left( \left. f_0 \right|_{\bigoplus_{i \in I_0^{1, 0}} P^i_0} \right)$. Since $\bigoplus_{i \in I_0^{1, 0}} P^i_0$ is countably generated and ${\rm Ker}\left( \left. f_0 \right|_{\bigoplus_{i \in I_0^{1, 0}} P^i_0} \right)$ is a submodule of $\bigoplus_{i \in I_0^{1, 0}} P^i_0$, we have that ${\rm Ker}\left( \left. f_0 \right|_{\bigoplus_{i \in I_0^{1, 0}} P^i_0} \right)$ is also countably generated, since $R$ is noetherian. Let $\mathcal{B}$ be a countable set of generators of ${\rm Ker}\left( \left. f_0 \right|_{\bigoplus_{i \in I_0^{1, 0}} P^i_0} \right)$. Let $b \in \mathcal{B}$, then $f(b) = 0$ and by exactness of the above sequence there exists $y_b \in \bigoplus_{i \in I_1} P^i_1$ such that $b = f_1(y_b)$. Let $Y' = \{ y_b \mbox{ : } b \in \mathcal{B} \}$. Note that $Y'$ is a countable subset of $(f_1)^{-1}\left( {\rm Ker}\left( \left. f_0 \right|_{\bigoplus_{i \in I_0^{1, 0}} P^i_0} \right) \right)$. Then $\left< Y' \right>$ is a countably generated submodule of $\bigoplus_{i \in I_1} P^i_1$. Hence there exists a countable subset $I_1^{1, 0} \subseteq I_1$ such that $\bigoplus_{i \in I_1^{1, 0}} P^i_1 \supseteq \left< Y' \right>$. Thus $f_1 \left( \bigoplus_{i \in I^{1, 0}_1} P^i_1 \right) \supseteq f_1(\left< Y' \right>)$. Now let $z \in {\rm Ker}\left( \left. f_0 \right|_{\bigoplus_{i \in I_0^{1, 0}} P^i_0} \right)$. Then $z = r_1 b_1 + \cdots + r_m b_m$, where each $b_j \in \mathcal{B}$. Since $b_j = f_1(y_{b_j})$ with $y_{b_j} \in Y'$, we get $z = f_1(r_1y_{b_1} + \cdots + r_m y_{b_m}) \in f_1(\left< Y' \right>)$. Hence, ${\rm Ker}\left( \left. f_0 \right|_{\bigoplus_{i \in I_0^{1, 0}} P^i_0} \right) \subseteq f_1(\left< Y' \right>) \subseteq f_1 \left( \bigoplus_{i \in I_1^{1, 0}} P^i_1 \right)$. Use the same argument to find a countable subset $I_2^{1, 0} \subseteq I_2$ such that $f_2\left( \bigoplus_{i \in I_2^{1, 0}} P^i_2 \right) \supseteq {\rm Ker}\left( f_1|_{\bigoplus_{i \in I_1^{1, 0}} P^i_1} \right)$. Repeat the same argument until find a countable subset $I_n^{1, 0} \subseteq I_n$ such that $f_n \left( \bigoplus_{i \in I_n^{1, 0}} P^i_n \right) \supseteq {\rm Ker}\left( f_{n-1}|_{\bigoplus_{i \in I_{n-1}^{1, 0}} P^i_{n-1}} \right)$. Now, $f_n\left( \bigoplus_{i \in I_n^{1,0}} P^i_n \right)$ is a countably generated submodule of $\bigoplus_{i \in I_{n-1}} P^i_{n-1}$. Then choose a countable subset $I_{n-1}^{1,0} \subseteq I_{n-1}^{1, 1} \subseteq I_{n-1}$ such that $f_n \left( \bigoplus_{i \in I_n^{1, 0}} P^i_n \right) \subseteq \bigoplus_{i \in I_{n-1}^{1,1}} P^i_{n-1}$. Repeat this process until find a countable subset $I^{1,0}_0 \subseteq I^{1,1}_0 \subseteq I_0$ satisfying $f_1\left( \bigoplus_{i \in I_1^{1,1}} P^i_1 \right)$ $\subseteq \bigoplus_{i \in I_0^{1,1}} P^i_0$. Now choose a countable subset $I_1^{1,1} \subseteq I_1^{1,2} \subseteq I_1$ such that $f_1\left( \bigoplus_{i \in I_2^{1, 2}} P^i_1 \right)$ $\supseteq {\rm Ker}\left( f_0|_{\bigoplus_{i \in I_0^{1,1}} P^i_0} \right)$. What we have been doing so far is called the zig-zag procedure. Keep repeating this procedure infinitely many times, and set $I^1_k = \bigcup_{m \geq 0} I_k^{1, m}$, for every $0 \leq k \leq n$. By construction, we get the following exact sequence \[ 0 \longrightarrow \bigoplus_{i\in I_n^1} P^i_n \longrightarrow \bigoplus_{i \in I_{n-1}^1} P^i_{n-1} \longrightarrow \cdots \longrightarrow \bigoplus_{i \in I^1_1} P^i_1 \longrightarrow \bigoplus_{i \in I^1_0} P^i_0 \longrightarrow M_1 \longrightarrow 0 \] where $x \in M_1 := {\rm CoKer}\left( \bigoplus_{i \in I^1_1} \longrightarrow \bigoplus_{i \in I_0^1} P^i_0 \right) \subseteq M$ and $N \subseteq M_1$. We take the quotient of the resolution of $M$ by the resolution of $M'$, and get the following commutative diagram: \\
\[ \begin{tikzpicture}
\matrix (m) [matrix of math nodes, row sep=2em, column sep=1.5em]
{ & 0 & & 0 & 0 & 0 \\ 0 & \bigoplus_{i \in I^1_n} P^i_n & \cdots & \bigoplus_{i \in I^1_1} P^i_1 & \bigoplus_{i \in I^1_0} P^i_0 & M_1 & 0 \\ 0 & \bigoplus_{i \in I_n} P^i_n & \cdots & \bigoplus_{i \in I_1} P^i_1 & \bigoplus_{i \in I_0} P^i_0 & M & 0 \\ 0 & \bigoplus_{i \in I_n - I^1_n} P^i_n & \cdots & \bigoplus_{i \in I_1 - I^1_1} P^i_1 & \bigoplus_{i \in I_0 - I^1_0} P^i_0 & M / M_1 & 0 \\ & 0 & & 0 & 0 & 0 \\ };
\path[->]
(m-1-2) edge (m-2-2) (m-1-4) edge (m-2-4) (m-1-5) edge (m-2-5) (m-1-6) edge (m-2-6) 
(m-2-1) edge (m-2-2) (m-2-2) edge (m-2-3) edge (m-3-2) (m-2-3) edge (m-2-4) (m-2-4) edge (m-2-5) edge (m-3-4) (m-2-5) edge (m-2-6) edge (m-3-5) (m-2-6) edge (m-2-7) edge (m-3-6)
(m-3-1) edge (m-3-2) (m-3-2) edge (m-3-3) edge (m-4-2) (m-3-3) edge (m-3-4) (m-3-4) edge (m-3-5) edge (m-4-4) (m-3-5) edge (m-3-6) edge (m-4-5) (m-3-6) edge (m-3-7) edge (m-4-6)
(m-4-1) edge (m-4-2) (m-4-2) edge (m-4-3) edge (m-5-2) (m-4-3) edge (m-4-4) (m-4-4) edge (m-4-5) edge (m-5-4) (m-4-5) edge (m-4-6) edge (m-5-5) (m-4-6) edge (m-4-7) edge (m-5-6);
\end{tikzpicture} \] 
where the third row is an exact sequence since the class of exact complexes is closed under taking cokernels. Then we have a projective resolution of length $n$ for $M / M_1$. Repeat the same procedure above for $M / M_1$, by choosing $x^1 + M_1 \in M/M_1 - \left\{ 0 + M_1 \right\}$, in order to get an exact sequence \\ \[ 0 \longrightarrow \bigoplus_{i \in I^2_n - I^1_n} P^i_n \longrightarrow \cdots \longrightarrow \bigoplus_{i \in I^2_1 - I^1_1} P^i_1 \longrightarrow \bigoplus_{i \in I^2_0 - I^1_0} P^i_0 \longrightarrow M_2 / M_1 \longrightarrow 0, \] for some module $M_1 \subseteq M_2 \subseteq M$, such that $I^2_k - I^1_k$ is countable for every $0 \leq k \leq n$. Note that \[ 0 \longrightarrow \bigoplus_{i \in I^2_n} P^i_n \longrightarrow \bigoplus_{i \in I^2_{n-1}} P^i_{n-1} \longrightarrow \cdots \longrightarrow \bigoplus_{i \in I^2_1} P^i_1 \longrightarrow \bigoplus_{i \in I^2_0} P^i_0 \longrightarrow M_2 \longrightarrow 0 \] is a projective resolution of $M_2$, since we have a commutative diagram \\
\[ \begin{tikzpicture}
\matrix (m) [matrix of math nodes, row sep=2em, column sep=1.5em]
{ & 0 & & 0 & 0 & 0 \\ 0 & \bigoplus_{i \in I^1_n} P^i_n & \cdots & \bigoplus_{i \in I^1_1} P^i_1 & \bigoplus_{i \in I^1_0} P^i_0 & M_1 & 0 \\ 0 & \bigoplus_{i \in I^2_n} P^i_n & \cdots & \bigoplus_{i \in I^2_1} P^i_1 & \bigoplus_{i \in I^2_0} P^i_0 & M_2 & 0 \\ 0 & \bigoplus_{i \in I^2_n - I^1_n} P^i_n & \cdots & \bigoplus_{i \in I^2_1 - I^1_1} P^i_1 & \bigoplus_{i \in I^2_0 - I^1_0} P^i_0 & M_2 / M_1 & 0 \\ & 0 & & 0 & 0 & 0 \\ };
\path[->]
(m-1-2) edge (m-2-2) (m-1-4) edge (m-2-4) (m-1-5) edge (m-2-5) (m-1-6) edge (m-2-6) 
(m-2-1) edge (m-2-2) (m-2-2) edge (m-2-3) edge (m-3-2) (m-2-3) edge (m-2-4) (m-2-4) edge (m-2-5) edge (m-3-4) (m-2-5) edge (m-2-6) edge (m-3-5) (m-2-6) edge (m-2-7) edge (m-3-6)
(m-3-1) edge (m-3-2) (m-3-2) edge (m-3-3) edge (m-4-2) (m-3-3) edge (m-3-4) (m-3-4) edge (m-3-5) edge (m-4-4) (m-3-5) edge (m-3-6) edge (m-4-5) (m-3-6) edge (m-3-7) edge (m-4-6)
(m-4-1) edge (m-4-2) (m-4-2) edge (m-4-3) edge (m-5-2) (m-4-3) edge (m-4-4) (m-4-4) edge (m-4-5) edge (m-5-4) (m-4-5) edge (m-4-6) edge (m-5-5) (m-4-6) edge (m-4-7) edge (m-5-6);
\end{tikzpicture} \] 
where the first and third rows are exact sequences, and then so is the second since the class of exact complexes is closed under extensions. We have that $M_1$ and $M_2$ are $n$-projective modules such that $M_1 \in \mathcal{P}^{\aleph_{0}}_n, M_2 / M_1 \in \mathcal{P}^{\aleph_0}_n$. Now suppose that there is an ordinal $\beta$ such that:
\begin{itemize}
\item[ (1)] $M_\alpha$ is an $n$-projective module, for every $\alpha < \beta$.

\item[ (2)] $M_\alpha \subseteq M_{\alpha'}$ whenever $\alpha \leq \alpha' < \beta$. 

\item[ (3)] $M_{\alpha + 1} / M_\alpha \in \mathcal{P}^{\aleph_0}_n$ whenever $\alpha + 1 < \beta$. 

\item[ (4)] $M_\gamma = \bigcup_{\alpha < \gamma} M_\alpha$ for every limit ordinal $\gamma < \beta$. 
\end{itemize} 
If $\beta$ is a limit ordinal, then set $M_\beta = \bigcup_{\alpha < \beta} M_\alpha$. Otherwise there exists an ordinal $\alpha < \beta$ such that $\beta = \alpha + 1$. In this case, construct $M_{\alpha + 1} \in \mathcal{P}_n$ from $M_\alpha$ as we constructed $M_2$ from $M_1$, such that $M_{\alpha + 1} / M_\alpha \in \mathcal{P}^{\aleph_0}_n$. By transfinite induction, we obtain a $\mathcal{P}_n^{\aleph_0}$-filtration $(M_\alpha \mbox{ : } \alpha < \lambda)$ of $M$, for some ordinal $\lambda$, such that $M_1 \supseteq N$ and $M_1 \in \mathcal{P}^{\aleph_0}_n$. \\
\end{proof}

From now on, $R$ shall be a noetherian ring. Now we are ready to prove the main result of this section. \\

\begin{theorem} Every chain complex $X \in {\rm dw}\widetilde{\mathcal{P}_n}$ has a ${\rm dw}\widetilde{\mathcal{P}_n^{\aleph_0}}$-filtration. 
\end{theorem}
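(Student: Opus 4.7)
The plan is to mimic the transfinite zig-zag construction of Lemma \ref{Kaplansky} but run it in every degree of $X$ at once, together with an extra zig-zag along the differentials of $X$ to ensure that the resulting subobjects are genuine subcomplexes. First I would fix, for each $m \in \mathbb{Z}$, a projective resolution $P_{m,\bullet} \to X_m$ of length $n$ with augmentation $f_{m,0} : P_{m,0} \to X_m$ and decomposition $P_{m,k} = \bigoplus_{i \in I_{m,k}} P^i_{m,k}$ into countably generated projective summands, and also fix a lift $\widetilde{\partial}_m : P_{m,\bullet} \to P_{m-1,\bullet}$ of $\partial^X_m$ as a chain map of projective resolutions. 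Then I would well-order $\bigsqcup_{m \in \mathbb{Z}} X_m$.

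Next I would construct, by transfinite induction, subcomplexes $X^\alpha \subseteq X$ parametrized by subsets $I^\alpha_{m,k} \subseteq I_{m,k}$ indexing exact sub-resolutions of each $P_{m,\bullet}$, with $X^\alpha_m = f_{m,0}\bigl(\bigoplus_{i \in I^\alpha_{m,0}} P^i_{m,0}\bigr)$. Set $X^0 = 0$ and take componentwise unions at limits. At a successor $\alpha + 1$, pick the least $x \in X_{m_0}$ not in $X^\alpha_{m_0}$. At degree $m_0$ (the \emph{vertical} zig-zag, within a single resolution), lift $x$ to some $y \in P_{m_0,0}$, choose a countable $J \subseteq I_{m_0,0}$ with $y \in \bigoplus_{i \in J} P^i_{m_0,0}$, and apply the zig-zag of Lemma \ref{Kaplansky} starting from $I^\alpha_{m_0,\bullet}$ augmented by $J$ at $k = 0$, obtaining extensions $I^{\alpha+1}_{m_0,k}$ with each $I^{\alpha+1}_{m_0,k} \setminus I^\alpha_{m_0,k}$ countable and the enlarged sub-resolution still exact. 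The three-row diagram at the end of Lemma \ref{Kaplansky} then furnishes a projective resolution of $X^{\alpha+1}_{m_0} / X^\alpha_{m_0}$ of length $n$ indexed only by these countable differences, so $X^{\alpha+1}_{m_0} / X^\alpha_{m_0} \in \mathcal{P}^{\aleph_0}_n$. Then (the \emph{horizontal} zig-zag, across the degrees of $X$) the new piece of $P_{m_0,0}$ is countably generated, so its image under $\widetilde{\partial}_{m_0,0}$ is supported on a countable subset $J' \subseteq I_{m_0-1,0}$; I would adjoin $J'$ to $I^\alpha_{m_0-1,0}$, rerun the vertical zig-zag at $m_0 - 1$, and cascade the same move downward through all $m < m_0$. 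Degrees $m > m_0$ need no update because $\partial^X_{m+1}(X^\alpha_{m+1}) \subseteq X^\alpha_m \subseteq X^{\alpha+1}_m$ already. The induction terminates at an ordinal $\lambda$ with $\bigcup_{\alpha < \lambda} X^\alpha = X$.

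The main obstacle will be verifying that the interleaved zig-zags keep each $I^{\alpha+1}_{m,k} \setminus I^\alpha_{m,k}$ countable while producing an exact sub-resolution at every $m$; both reduce, as in Lemma \ref{Kaplansky}, to countable unions of countable sets together with the noetherian hypothesis on $R$, which guarantees that kernels and preimages of countably generated modules inside the $P_{m,k}$ are countably generated. Once this bookkeeping is in place, the three-row diagram at every $m$ forces $X^{\alpha+1}_m / X^\alpha_m \in \mathcal{P}^{\aleph_0}_n$, so $X^{\alpha+1}/X^\alpha$ lies in ${\rm dw}\widetilde{\mathcal{P}^{\aleph_0}_n}$ and the sequence $(X^\alpha)_{\alpha < \lambda}$ is the desired filtration.
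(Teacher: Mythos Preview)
Your proposal is correct and follows essentially the same strategy as the paper: at each successor step, pick an element in some degree $m_0$, use the zig-zag of Lemma~\ref{Kaplansky} to enlarge the resolution at $m_0$ by a countable piece, then cascade the construction downward through all $m < m_0$ to obtain a bounded-above subcomplex with $X^{\alpha+1}/X^\alpha \in {\rm dw}\widetilde{\mathcal{P}^{\aleph_0}_n}$. The paper simply invokes Lemma~\ref{Kaplansky} as a black box at each degree (using that $\partial_m(X^1_m)$ is countably generated in $X_{m-1}$), whereas you re-run the index bookkeeping explicitly and route the horizontal step through a fixed lift $\widetilde{\partial}_{m,0}$ of the differential; this is a cosmetic difference and both devices yield the same containment $\partial^X_m(X^{\alpha+1}_m) \subseteq X^{\alpha+1}_{m-1}$. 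Your explicit well-ordering of $\bigsqcup_m X_m$ is a nice touch that the paper leaves implicit.
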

\begin{proof} Let $X \in {\rm dw}\mathcal{P}_n$ and write \[ X = \cdots \longrightarrow X_{k+1} \stackrel{\partial_{k+1}}\longrightarrow X_k \stackrel{\partial_k}\longrightarrow X_{k-1} \longrightarrow \cdots. \] For each $k$ one has a projective resolution of $X_k$ of length $n$: \[ 0 \longrightarrow \bigoplus_{i \in I_n(k)} P^i_n(k) \longrightarrow \cdots \longrightarrow \bigoplus_{i \in I_1(k)} P^i_1(k) \longrightarrow \bigoplus_{i \in I_0(k)} P^i_0(k) \longrightarrow X_k \longrightarrow 0. \] \\ We shall construct a ${\rm dw}\widetilde{\mathcal{P}_n^{\aleph_0}}$-filtration of $X$ by using transfinite induction. For $\alpha = 0$ set $X^0 = 0$. For $\alpha = 1$, choose $m \in \mathbb{Z}$. Let $S$ be a countably generated submodule of $X_m$. By the previous lemma, there exists a submodule $\mathcal{P}^{\aleph_0}_n \ni X^1_m \subseteq X_m$ such that $S \subseteq X^1_m$. Note that $X^1_m$ is also countably generated. Then $\partial_m(X^1_m)$ is a countably generated submodule of $X_{m-1}$, and so there exists $\mathcal{P}^{\aleph_0}_n \ni X^1_{m-1} \subseteq X_{m-1}$ such that $\partial_m(X^1_m) \subseteq X^1_{m-1}$. Repeat the same procedure infinitely many times in order to obtain a subcomplex \[ X^1 = \cdots \longrightarrow X^1_{k+1} \longrightarrow X^1_k \longrightarrow X^1_{k-1} \longrightarrow \cdots \] of $X$ such that $X^1_k \in \mathcal{P}^{\aleph_0}_n$ for every $k \in \mathbb{Z}$ (we are setting $X^1_k = 0$ for every $k > m$). Hence $X^1 \in {\rm dw}\widetilde{\mathcal{P}^{\aleph_0}_n}$. Note from the proof of the previous lemma that the quotient $X / X^1$ is in ${\rm dw}\widetilde{\mathcal{P}_n}$. We have \[ X / X^1 = \cdots \longrightarrow X_{k+1} / X^1_{k+1} \longrightarrow X_k / X^1_k \longrightarrow X_{k-1} / X^1_{k-1} \longrightarrow \cdots, \] where for every $k \leq m$ one has the following projective resolutions of length $n$ for $X^1_k$ and $X_k / X^1_{k}$:  
\begin{align*}
0 & \longrightarrow \bigoplus_{i \in I_n^1(k)} P^i_n(k) \longrightarrow \cdots \longrightarrow \bigoplus_{i \in I^1_1(k)} P^i_1(k) \longrightarrow \bigoplus_{i \in I^1_0(k)} P^i_0(k) \longrightarrow X^1_k \longrightarrow 0, \\
0 & \longrightarrow \bigoplus_{i \in I_n(k) - I_n^1(k)} P^i_n(k) \longrightarrow \cdots \longrightarrow \bigoplus_{i \in I_0(k) - I^1_0(k)} P^i_0(k) \longrightarrow X_k / X^1_k \longrightarrow 0.
\end{align*}
Apply the same procedure above to the complex $X / X^1$, in order to get a subcomplex \[ X^2 / X^1 = \cdots \longrightarrow X^2_{k+1} / X^1_{k+1} \longrightarrow X^2_k / X^1_k \longrightarrow X^2_{k-1} / X^1_{k-1} \longrightarrow \cdots \] of $X / X^1$, such that for each $k \in \mathbb{Z}$ one has the following projective resolution of length $n$ for $X^2_k / X^1_k$: \\ \[ 0 \longrightarrow \bigoplus_{i \in I^2_n - I^1_n} P^i_n(k) \longrightarrow \cdots \longrightarrow \bigoplus_{i \in I^2_1 - I^1_1} P^i_1(k) \longrightarrow \bigoplus_{i \in I^2_0 - I^1_0} P^i_0(k) \longrightarrow X^2_k / X^1_k \longrightarrow 0, \] where each $I^2_j - I^1_j \subseteq I_j$ is countable. Now consider the complex \[ X^2 = \cdots \longrightarrow X^2_{k+1} \longrightarrow X^2_k \longrightarrow X^2_{k-1} \longrightarrow \cdots. \] As we did in the proof of the previous lemma, we have that \[ 0 \longrightarrow \bigoplus_{i \in I^2_n(k)} P^i_n(k) \longrightarrow \cdots \longrightarrow \bigoplus_{i \in I^2_1(k)} P^i_1(k) \longrightarrow \bigoplus_{i \in I^2_0(k)} P^i_0(k) \longrightarrow X^2_k \longrightarrow 0 \] is an exact sequence. So $X^2_k \in \mathcal{P}_n$ for every $k \in \mathbb{Z}$, and hence $X^2 \in {\rm dw}\widetilde{\mathcal{P}_n}$, with $X^2 / X^1 \in {\rm dw}\widetilde{\mathcal{P}^{\aleph_0}_n}$. The rest of the proof follows by transfinite induction, as in the end of the proof of the previous lemma.
\end{proof}


\section{Exact degreewise $n$-projective complexes} 

Consider the class of exact dw-$n$-projective complexes ${\rm ex}\widetilde{\mathcal{P}_n} = {\rm dw}\widetilde{\mathcal{P}_n} \cap \mathcal{E}$, where $\mathcal{E}$ denotes the class of exact complexes. The goal of this section is to prove that $({\rm ex}\widetilde{\mathcal{P}_n}, ({\rm ex}\widetilde{\mathcal{P}_n})^\perp)$ is a complete cotorsion pair. This pair is a cotorsion pair by the following result by Gillespie:
\begin{proposition}{\rm \cite[Proposition 3.3]{Gillespie}} Let $(\mathcal{A, B})$ be a cotorsion pair in an abelian category $\mathcal{C}$ with enough projective and injective objects. If $\mathcal{B}$ contains a cogenerator of finite injective dimension then $({\rm ex}\widetilde{\mathcal{A}}, ({\rm ex}\widetilde{\mathcal{A}})^\perp)$ is a cotorsion pair. \\
\end{proposition}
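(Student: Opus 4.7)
The strategy is to verify that ${\rm ex}\widetilde{\mathcal{A}}$ equals ${}^\perp\bigl(({\rm ex}\widetilde{\mathcal{A}})^\perp\bigr)$; the other half of the cotorsion pair definition is automatic. One inclusion is immediate, so the entire content lies in showing that any complex $X$ satisfying ${\rm Ext}^1(X, Y) = 0$ for every $Y \in ({\rm ex}\widetilde{\mathcal{A}})^\perp$ must itself be exact with every term in $\mathcal{A}$.

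First I would identify a useful family of test complexes in $({\rm ex}\widetilde{\mathcal{A}})^\perp$. The easiest ones are the disk complexes $D^n(B)$ built from objects $B \in \mathcal{B}$, with $B$ placed in degrees $n$ and $n-1$ connected by the identity and zeros elsewhere. One has the standard identification ${\rm Ext}^1(E, D^n(B)) \cong {\rm Ext}^1_R(E_{n-1}, B)$ valid for any chain complex $E$, and since each term of a complex in ${\rm ex}\widetilde{\mathcal{A}}$ belongs to $\mathcal{A} = {}^\perp\mathcal{B}$, this immediately places $D^n(B)$ inside $({\rm ex}\widetilde{\mathcal{A}})^\perp$. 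Applying the hypothesis on $X$ to $Y = D^n(B)$ and invoking the same formula gives ${\rm Ext}^1_R(X_{n-1}, B) = 0$ for every $B \in \mathcal{B}$, hence $X_{n-1} \in \mathcal{A}$. Running this over all $n$ settles the degreewise condition.

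The delicate step is proving exactness of $X$, and this is where the cogenerator $W \in \mathcal{B}$ of finite injective dimension $d$ plays its essential role. Starting from an injective resolution $0 \to W \to I^0 \to \cdots \to I^d \to 0$, the plan is to assemble, for each integer $n$, a bounded auxiliary complex $Y_n$ supported in degrees around $n$ that both lies in $({\rm ex}\widetilde{\mathcal{A}})^\perp$ and admits a dimension-shifting computation linking ${\rm Ext}^1(X, Y_n)$ with ${\rm Hom}_R(H_n(X), W)$. Once this is set up, the vanishing hypothesis on $X$ forces ${\rm Hom}_R(H_n(X), W) = 0$, and because $W$ is a cogenerator this collapses to $H_n(X) = 0$, yielding exactness in every degree.

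The principal obstacle is the construction of $Y_n$ and the verification that $Y_n \in ({\rm ex}\widetilde{\mathcal{A}})^\perp$. Finiteness of the injective dimension of $W$ is precisely what allows this construction to terminate after $d+1$ steps; without it, controlling ${\rm Ext}^1(E, -)$ along the pieces of an infinite resolution would require much stronger hypotheses than merely $E_k \in \mathcal{A}$. The concluding dimension-shift identifying ${\rm Ext}^1(X, Y_n)$ with ${\rm Hom}_R(H_n(X), W)$ is a finite induction on $d$, but the bookkeeping carries essentially all the technical content of the argument.
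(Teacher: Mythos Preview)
The paper does not give its own proof of this proposition; it is quoted verbatim from \cite{Gillespie} and used as a black box. There is thus nothing in the paper to compare your argument against, so I comment only on the soundness of your outline.

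Your first step is correct and standard: the disks $D^n(B)$ lie in $({\rm ex}\widetilde{\mathcal{A}})^\perp$ via the adjunction isomorphism ${\rm Ext}^1_{\rm Ch}(E, D^n(B)) \cong {\rm Ext}^1_{\mathcal C}(E_{n-1}, B)$, and testing $X$ against them forces every $X_m$ into $\mathcal{A}$.

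Your second step, however, is both vaguer and more elaborate than necessary, and the central assertion---that one can assemble from the injective resolution of $W$ a bounded complex $Y_n$ with ${\rm Ext}^1_{\rm Ch}(X, Y_n) \cong {\rm Hom}(H_n(X), W)$---is not justified and does not obviously hold: filtering such a $Y_n$ by spheres of injectives gives at best a filtration of ${\rm Ext}^1(X, Y_n)$ whose graded pieces are various ${\rm Hom}(H_k(X), I^j)$, and extracting ${\rm Hom}(H_n(X), W)$ from that is not the routine bookkeeping you suggest. More to the point, the finite injective dimension of $W$ is not needed here at all. Since $\mathcal{C}$ has enough injectives, embed $W$ into an injective $I^0$; then $I^0$ is itself an injective cogenerator. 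For any injective $I$ the sphere $S^{n-1}(I)$ lies in $\mathcal{E}^\perp \subseteq ({\rm ex}\widetilde{\mathcal{A}})^\perp$, and the short exact sequence $0 \to S^{n-1}(I) \to D^n(I) \to S^n(I) \to 0$ together with ${\rm Ext}^1_{\rm Ch}(X, D^n(I)) \cong {\rm Ext}^1_{\mathcal C}(X_{n-1}, I) = 0$ yields directly
\[
{\rm Ext}^1_{\rm Ch}\bigl(X, S^{n-1}(I^0)\bigr) \;\cong\; {\rm Hom}_{\mathcal C}\bigl(H_n(X), I^0\bigr).
\]
Vanishing of the left-hand side then gives $H_n(X) = 0$ because $I^0$ is a cogenerator. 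No auxiliary complex $Y_n$ and no induction on $d$ are required.
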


Recall that a {\bf cogenerator} in an abelian category $\mathcal{C}$ is an object $C$ such that for every nonzero object $H$ there exists a nonzero morphism $f : H \longrightarrow C$. For example, $\mbox{Mod-}R$ has a an injective cogenerator given by the abelian group ${\rm Hom}(R, \mathbb{Q} / \mathbb{Z})$ of group homomorphisms and providing it with the scalar multiplication defined by \[ f \cdot r : R \longrightarrow \mathbb{Q} / \mathbb{Z}, \mbox{ } s \mapsto f(rs) \] for $f \in {\rm Hom}(R, \mathbb{Q} / \mathbb{Z})$ and $r \in R$ (see \cite[Proposition 4.7.5]{Borceaux} for details). Since ${\rm Hom}(R, \mathbb{Q} / \mathbb{Z}) \in \mathcal{P}_n^\perp$, we have that $\left( {\rm ex}\mathcal{P}_n, ({\rm ex}\mathcal{P}_n)^\perp \right)$ is a cotorsion pair. \\

Given a module $M \in \mathcal{P}_n$, consider a projective resolution of $M$ of length $n$: \[ 0 \longrightarrow \bigoplus_{i \in I_n} P^i_n \longrightarrow \bigoplus_{i \in I_{n-1}} P^i_{n-1} \longrightarrow \cdots \longrightarrow \bigoplus_{i \in I_1} P^i_1 \longrightarrow \bigoplus_{i \in I_0} P^i_0 \longrightarrow M \longrightarrow 0 \mbox{ \ ($\ast$)}. \] We shall say that a projective resolution \[ 0 \longrightarrow \bigoplus_{i \in I'_n} P^i_n \longrightarrow \bigoplus_{i \in I'_{n-1}} P^i_{n-1} \longrightarrow \cdots \longrightarrow \bigoplus_{i \in I'_1} P^i_1 \longrightarrow \bigoplus_{i \in I'_0} P^i_0 \longrightarrow N \longrightarrow 0 \mbox{ \ ($\ast \ast$)} \] is a {\bf nice subresolution} of ($\ast$) if $I'_k \subseteq I_k$ for every $0\leq k \leq n$ and $N \subseteq M$. \\

From now on, fix an infinite cardinal $\kappa$ such that $\kappa \geq {\rm Card}(R)$. We shall say that a set $S$ is {\bf small} if ${\rm Card}(S) \leq \kappa$. We shall also say that a chain complex $X = (X_m)_{m \in \mathbb{Z}}$ is {\bf small} if ${\rm Card}(X) \leq \kappa$, where \[ {\rm Card}(X) := \sum_{m \in \mathbb{Z}} {\rm Card}(X_m). \] So a complex $X$ is small if and only if each term $X_m$ is a small set. Note that if $M$ is an $n$-projective module with a resolution given by ($\ast$), then it is small if and only if ${\rm Card}(I_k) \leq \kappa$ for every $0 \leq k \leq n$. Let $\mathcal{P}^{\leq \kappa}_n$ denote the set of $n$-projective modules with a small projective resolution. If we consider the resolutions ($\ast$) and ($\ast \ast$) above, then note that ($\ast \ast$) is a small and nice subresolution of ($\ast$) if each $I'_k$ is a small subset of $I_k$. Consider the set \[ {\rm ex}\widetilde{\mathcal{P}_n^{\leq \kappa}} = \{ X \in \Ch \mbox{ : } X \mbox{ is exact and $X_m \in \mathcal{P}^{\leq \kappa}_n$ for every $m \in \mathbb{Z}$} \}. \] We shall prove that every exact dw-$n$-projective complex has a ${\rm ex}\widetilde{\mathcal{P}^{\leq \kappa}_n}$-filtration. \\

\begin{lemma}\label{lemma2} Let $M \in \mathcal{P}_n$ with a projective resolution given by ($\ast$). For every submodule $N \subseteq M$ with ${\rm Card}(N) \leq \kappa$, there exists a small and nice subresolution \[ 0 \longrightarrow \bigoplus_{i \in I'_n} P^i_n \longrightarrow \bigoplus_{i \in I'_{n-1}} P^i_{n-1} \longrightarrow \cdots \longrightarrow \bigoplus_{i \in I'_1} P^i_1 \longrightarrow \bigoplus_{i \in I'_0} P^i_0 \longrightarrow N' \longrightarrow 0 \mbox{ \ ($\ast \ast \ast$)} \] of ($\ast$) such that $N \subseteq N'$. Moreover, if $N$ has an small and nice subresolution of $M$, then ($\ast \ast \ast$) can be constructed in such a way that it contains the given resolution of $N$. 
\end{lemma}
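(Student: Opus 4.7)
The plan is to adapt the zig-zag argument from the proof of Lemma \ref{Kaplansky}, replacing ``countably generated'' by ``small'' (i.e., of cardinality $\leq \kappa$). The noetherian hypothesis used there to keep submodules of countably generated modules countably generated is now unnecessary: since $\kappa \geq {\rm Card}(R)$, any module generated by at most $\kappa$ elements has cardinality $\leq \kappa$, so all of its submodules are automatically small.

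First, I fix a generating set $\mathcal{G}$ of $N$ with ${\rm Card}(\mathcal{G}) \leq \kappa$ (for instance $\mathcal{G} = N$ itself). Using surjectivity of $f_0$, for each $g \in \mathcal{G}$ I choose a lift $y_g \in \bigoplus_{i \in I_0} P^i_0$. Since each $y_g$ has finite support, the union of all these supports gives a subset $I_0^{1,0} \subseteq I_0$ with $|I_0^{1,0}| \leq \kappa$ and $N \subseteq f_0\left( \bigoplus_{i \in I_0^{1,0}} P^i_0 \right)$. Next I go up the resolution: the kernel of $f_0$ restricted to $\bigoplus_{i \in I_0^{1,0}} P^i_0$ is a submodule of a $\kappa$-generated module, hence small; by exactness of $(\ast)$, lifting each of its elements through $f_1$ and collecting supports yields a small subset $I_1^{1,0} \subseteq I_1$ whose image under $f_1$ contains that kernel. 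Iterating $n$ times gives small $I_k^{1,0} \subseteq I_k$ for every $0 \leq k \leq n$.

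Then I zig-zag back down to restore exactness at each intermediate level: the image $f_k\left( \bigoplus_{i \in I_k^{1,0}} P^i_k \right)$ may not be contained in $\bigoplus_{i \in I_{k-1}^{1,0}} P^i_{k-1}$, so I enlarge $I_{k-1}^{1,0}$ to a small set $I_{k-1}^{1,1}$ containing the required finitely-supported coordinates. Repeating downward and then upward, I obtain a sequence of small index sets $I_k^{1,m}$ for $m \geq 0$. Setting $I'_k = \bigcup_{m \geq 0} I_k^{1,m}$, each $I'_k$ remains of cardinality $\leq \kappa$ (countable union of small sets), and by construction the restricted sequence is exact: any element landing in a kernel at some finite stage $m$ has its preimage included by stage $m+1$, while any image at stage $m$ is absorbed into the next row at stage $m+1$. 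Taking $N' = f_0\left( \bigoplus_{i \in I'_0} P^i_0 \right) \supseteq N$ yields the desired small and nice subresolution $(\ast\ast\ast)$ of $(\ast)$.

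For the ``moreover'' clause, if $N$ already carries a given small and nice subresolution with index sets $J_k \subseteq I_k$, I simply enlarge the initial $I_k^{1,0}$ to contain $J_k$ (possible since each $|J_k| \leq \kappa$) before beginning the zig-zag; the resulting $I'_k \supseteq J_k$ ensures that the given resolution of $N$ sits inside the new resolution of $N'$. The main obstacle I expect is the exactness bookkeeping at the limit: one must verify that alternating infinitely many ``cover new kernels above'' and ``cover new images below'' stages really does produce a chain complex with $\ker = {\rm im}$ at each level. This is handled just as in the concluding paragraph of the proof of Lemma \ref{Kaplansky}, using that the union of a countable chain of exactness-approximations is itself exact.
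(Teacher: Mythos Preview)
Your argument is correct and follows essentially the same zig-zag strategy as the paper's own proof: lift $N$ into a small summand of $P_0$, climb the resolution by covering successive restricted kernels with small summands, then alternate back down to absorb images, and take the union of the countable tower of index sets. The paper carries this out with the same bookkeeping (using that $\kappa \geq {\rm Card}(R)$ keeps all intermediate modules small without any noetherian hypothesis).

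The only point of divergence is the ``moreover'' clause. You handle it by simply seeding the zig-zag with $I_k^{1,0} \supseteq J_k$, so that the final $I'_k$ automatically contain the given index sets. The paper instead passes to the quotient resolution $0 \to \bigoplus_{I_k - J_k} P^i_k \to \cdots \to M/N \to 0$, applies the first part there to a cyclic submodule $\langle z + N\rangle$, and then reassembles. Both are valid; your version is a bit more direct, while the paper's quotient formulation makes explicit that the resulting subresolution of $N'$ has the given resolution of $N$ as a \emph{nice} subresolution (i.e., the quotient resolution $0 \to \bigoplus_{I'_k - J_k} P^i_k \to \cdots \to N'/N \to 0$ is again exact), which is what is actually used downstream in the filtration arguments. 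Your seeding approach yields the same conclusion once one observes that taking quotients of the zig-zag output by the $J_k$-summands preserves exactness, but you should state this explicitly.
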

\begin{proof} Since $f_0$ is surjective, for every $x \in N$ choose $y_x \in \bigoplus_{i \in I_0} P^i_0$ such that $x = f_0(y_x)$. Let $Y = \{ y_x \mbox{ : } x \in N \}$. Note that $\left< Y \right>$ is a small submodule of $\bigoplus_{i \in I_0} P^i_0$. So there exists  a small subset $I^0_0 \subseteq I_0$ such that $\left< Y \right> \subseteq \bigoplus_{i \in I^0_0} P^i_0$. We have $f_0\left( \bigoplus_{i \in I^0_0} P^i_0 \right) \supseteq N$. Now consider the submodule ${\rm Ker}\left( f_0|_{\bigoplus_{i \in I^0_0}} P^i_0 \right)$ of $f_0\left( \bigoplus_{i \in I^0_0} P^i_0 \right)$, which is small since $f_0\left( \bigoplus_{i \in I^0_0} P^i_0 \right)$ is. Then we can choose a small subset $I^0_1 \subseteq I_1$ such that $f_1\left( \bigoplus_{i \in I^0_1} P^i_1 \right) \supseteq {\rm Ker}\left( f_0|_{\bigoplus_{i \in I^0_0} P^i_0} \right)$. Repeat the same argument until find a small subset $I^0_n \subseteq I_n$ such that $f_n\left( \bigoplus_{i \in I^0_n} P^i_n \right) \supseteq {\rm Ker}\left( f_{n-1}|_{\bigoplus_{i \in I^0_{n-1}} P^i_{n-1}} \right)$. Since $f_n\left( \bigoplus_{i \in I^0_n} P^i_n \right)$ is a small submodule of $\bigoplus_{i \in I_{n-1}} P^i_{n-1}$, we can choose a small subset $I^0_{n-1} \subseteq I^1_{n-1} \subseteq I_{n-1}$ such that $f_n\left( \bigoplus_{i \in I^0_n} P^i_n \right) \subseteq \bigoplus_{i \in I^1_{n-1}} P^i_{n-1}$. From this point just use the zig-zag procedure in order to get small subsets $I'_k = \bigcup_{j \geq 0} I^j_k \subseteq I_k$ and an exact sequence \[ 0 \longrightarrow \bigoplus_{i \in I'_n} P^i_n \longrightarrow \bigoplus_{i \in I'_{n-1}} P^i_{n-1} \longrightarrow \cdots \longrightarrow \bigoplus_{i \in I'_1} P^i_1 \longrightarrow \bigoplus_{i \in I'_0} P^i_0 \longrightarrow N' \longrightarrow 0 \] where $N' := {\rm CoKer}\left( \bigoplus_{i \in I'_1} P^i_1 \longrightarrow \bigoplus_{i \in I'_0} P^i_0 \right)$ and $N \subseteq N' \subseteq M$.  \\

Now suppose that $N$ has a small and nice subresolution \[ 0 \longrightarrow \bigoplus_{i \in I'^N_n} P^i_n \longrightarrow \cdots \longrightarrow \bigoplus_{i \in I^N_1} P^i_1 \longrightarrow \bigoplus_{i \in I^N_0} P^i_0 \longrightarrow N \longrightarrow 0 \] of ($\ast$). Take the quotient of ($\ast$) by this resolution and get \[ 0 \longrightarrow \bigoplus_{i \in I_n - I^N_n} P^i_n \longrightarrow \cdots \longrightarrow \bigoplus_{i \in I_1 - I^N_1} P^i_1 \longrightarrow \bigoplus_{i \in I_0 - I^N_0} P^i_0 \longrightarrow M / N \longrightarrow 0. \] Repeat the argument above using this sequence and the small submodule $\left< z + N \right>$, where $z \not\in N$. Then we get a projective subresolution of the previous one: \[ 0 \longrightarrow \bigoplus_{i \in I'_n - I^N_n} P^i_n \longrightarrow \cdots \longrightarrow \bigoplus_{i \in I'_1 - I^N_1} P^i_1 \longrightarrow \bigoplus_{i \in I'_0 - I^N_0} P^i_0 \longrightarrow N' / N \longrightarrow 0 \] where each set $I'_k - I^N_k$ is a small set. As we did in the proof of Lemma \ref{Kaplansky}, we have that \[ 0 \longrightarrow \bigoplus_{i \in I'_n} P^i_n \longrightarrow \cdots \longrightarrow \bigoplus_{i \in I'_1} P^i_1 \longrightarrow \bigoplus_{i \in I'_0} P^i_0 \longrightarrow N' \longrightarrow 0 \] is small and nice subresolution of ($\ast$), and that contains the resolution of $N$ as a nice subresolution. \\
\end{proof}

\begin{lemma} Let $X \in {\rm dw}\widetilde{\mathcal{P}_n}$ and let $Y$ be a bounded above subcomplex of $X$ such that ${\rm Card}(Y_k) \leq \kappa$ for every $k \in \mathbb{Z}$. Then there exists a (bounded above) subcomplex $Y'$ of $X$ such that $Y \subseteq Y'$ and $Y' \in {\rm dw}\widetilde{\mathcal{P}^{\leq \kappa}_n}$. 
\end{lemma}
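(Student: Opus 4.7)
The plan is to build $Y'$ degree by degree via downward induction, applying Lemma \ref{lemma2} at each step to promote a small submodule of $X_k$ to one in $\mathcal{P}^{\leq \kappa}_n$, while absorbing the image of the boundary coming from the degree just constructed, so that the resulting family assembles into a subcomplex of $X$.

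Since $Y$ is bounded above, fix $m \in \mathbb{Z}$ with $Y_k = 0$ for every $k > m$, and set $Y'_k := 0$ for $k > m$. For the base case $k = m$, note that $Y_m$ is a submodule of $X_m \in \mathcal{P}_n$ with ${\rm Card}(Y_m) \leq \kappa$; Lemma \ref{lemma2} applied to the fixed projective resolution of $X_m$ and the submodule $Y_m$ yields $Y'_m$ with $Y_m \subseteq Y'_m \subseteq X_m$ and $Y'_m \in \mathcal{P}^{\leq \kappa}_n$. Since $Y'_m$ admits a small projective resolution whose top term is a direct sum of at most $\kappa$ countably generated projectives, and $\kappa \geq {\rm Card}(R)$, we also get ${\rm Card}(Y'_m) \leq \kappa$.

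For the inductive step, assume $Y'_{k+1} \subseteq X_{k+1}$ has been constructed with $Y_{k+1} \subseteq Y'_{k+1}$, $Y'_{k+1} \in \mathcal{P}^{\leq \kappa}_n$, and ${\rm Card}(Y'_{k+1}) \leq \kappa$. Put $Z_k := Y_k + \partial^X_{k+1}(Y'_{k+1})$, a submodule of $X_k$ of cardinality at most $\kappa$ by the inductive hypothesis. Apply Lemma \ref{lemma2} to $X_k \in \mathcal{P}_n$ and to $Z_k$ to obtain $Y'_k$ with $Z_k \subseteq Y'_k \subseteq X_k$ and $Y'_k \in \mathcal{P}^{\leq \kappa}_n$; the same cardinality count as above gives ${\rm Card}(Y'_k) \leq \kappa$, closing the induction.

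By construction $\partial^X_{k+1}(Y'_{k+1}) \subseteq Z_k \subseteq Y'_k$ for every $k$, so $Y' = (Y'_k)_{k \in \mathbb{Z}}$ together with the restricted boundary maps is a subcomplex of $X$, bounded above by $m$, with $Y \subseteq Y'$ and $Y' \in {\rm dw}\widetilde{\mathcal{P}^{\leq \kappa}_n}$. The only delicate point is the cardinality bookkeeping: one must ensure that at every stage the submodule being enlarged is still small, which relies on the observation that membership in $\mathcal{P}^{\leq \kappa}_n$ implies cardinality at most $\kappa$ (using $\kappa \geq {\rm Card}(R)$ and the countable generation of the summands $P^i_j(k)$ inherited from the Kaplansky decomposition fixed in the previous section). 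Everything else is routine from Lemma \ref{lemma2}; the boundedness of $Y$ is what makes the downward induction even start.
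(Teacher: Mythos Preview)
Your proof is correct and follows essentially the same approach as the paper: downward induction from the top degree $m$, applying Lemma \ref{lemma2} at each step to the small submodule $Y_k + \partial_{k+1}(Y'_{k+1})$ of $X_k$. The only cosmetic difference is that the paper explicitly records the small and nice subresolutions of each $Y'_k$ (a piece of data reused in the proof of the next theorem), whereas you record only membership in $\mathcal{P}^{\leq \kappa}_n$, which is all the lemma itself claims.
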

\begin{proof} We are given the following commutative diagram
\[ \begin{tikzpicture}
\matrix (m) [matrix of math nodes, row sep=1.5em, column sep=3em]
{ Y =\mbox{ } \cdots & 0 & Y_m & Y_{m-1} & \cdots \\ X =\mbox{ } \cdots & X_{m+1} & X_m & X_{m-1} & \cdots \\ };
\path[->]
(m-1-1) edge (m-1-2)
(m-1-2) edge (m-1-3) edge (m-2-2)
(m-1-3) edge node[above] {$\partial_m$} (m-1-4) edge (m-2-3)
(m-1-4) edge (m-1-5) edge (m-2-4)
(m-2-1) edge (m-2-2) (m-2-2) edge node[above] {$\partial_{m+1}$} (m-2-3) (m-2-3) edge node[above] {$\partial_m$} (m-2-4) (m-2-4) edge (m-2-5);
\end{tikzpicture} \]
Since $X_m$ is an $n$-projective module, we have a projective resolution \[ 0 \longrightarrow \bigoplus_{i \in I_n(m)} P^i_n(m) \longrightarrow \cdots \longrightarrow \bigoplus_{i \in I_1(m)} P^i_1(m) \longrightarrow \bigoplus_{i \in I_0(m)} P^i_0(m) \longrightarrow X_m \longrightarrow 0. \] By the previous lemma, there exists a submodule $Y'_m$ of $X_m$ containing $Y_m$, along with a small and nice subresolution \[ 0 \longrightarrow \bigoplus_{i \in I'_n(m)} P^i_n(m) \longrightarrow \cdots \longrightarrow \bigoplus_{i \in I'_1(m)} P^i_1(m) \longrightarrow \bigoplus_{i \in I'_0(m)} P^i_0(m) \longrightarrow Y'_m \longrightarrow 0. \] Note that ${\rm Card}(\partial_m(Y'_m) + Y_{m-1}) \leq \kappa$ and $Y_{m-1} \subseteq \partial_m(Y'_m) + Y_{m-1} \subseteq X_{m-1}$. Now choose a submodule $Y'_{m-1} \subseteq X_{m-1}$ such that $\partial_m(Y'_m) + Y_{m-1} \subseteq Y'_{m-1}$ and $Y'_{m-1}$ has a small and nice subresolution of a fixed resolution of $X_{m-1}$. Repeat this process infinitely many times in order to obtain a complex \[ Y' = \cdots \longrightarrow 0 \longrightarrow Y'_m \longrightarrow Y'_{m-1} \longrightarrow \cdots \] such that $Y \subseteq Y' \subseteq X$ and $Y' \in {\rm dw}\widetilde{\mathcal{P}^{\leq \kappa}_n}$. \\
\end{proof}

\begin{theorem} Every $X \in {\rm ex}\widetilde{\mathcal{P}_n}$ has a ${\rm ex}\widetilde{\mathcal{P}_n^{\leq \kappa}}$-filtration. 
\end{theorem}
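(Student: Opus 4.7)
My plan is a transfinite induction that mirrors the proof of the last theorem of Section~3, but with stronger invariants in order to preserve exactness and to yield successor quotients in $\mathcal{P}_n^{\leq\kappa}$ termwise. For each $k\in\mathbb{Z}$, fix once and for all a projective resolution of $X_k$ of length $n$. I maintain on $X^\alpha$ the invariants: (i) $X^\alpha$ is an exact subcomplex of $X$; (ii) for each $k$, $X^\alpha_k$ admits a distinguished nice subresolution of the fixed resolution of $X_k$, and these nice subresolutions at successive ordinals extend each other in the sense of the second part of Lemma~\ref{lemma2}. Start with $X^0=0$ (trivial nice subresolution); at a limit ordinal, take degreewise unions and the corresponding union of nice subresolutions. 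Both invariants transfer, since exactness is preserved under directed unions of subcomplexes and compatible unions of nice subresolutions are nice subresolutions by~(ii). Observe that invariant~(ii) implies, exactly as in the computation in the proof of Lemma~\ref{lemma2}, that the quotient $X_k/X^\alpha_k$ inherits a projective resolution of length $n$ (the complementary one), so $C:=X/X^\alpha$ is itself in ${\rm ex}\widetilde{\mathcal{P}_n}$.

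The successor step is the crux. Given $X^\alpha\subsetneq X$ satisfying the invariants, pick a nonzero $\bar x\in C_m$ and construct inside $C$ a small exact subcomplex $\bar Z$ containing $\bar x$ whose terms lie in $\mathcal{P}_n^{\leq\kappa}$; setting $X^{\alpha+1}:=\pi^{-1}(\bar Z)$ for $\pi:X\to C$ gives $X^{\alpha+1}/X^\alpha\cong\bar Z\in{\rm ex}\widetilde{\mathcal{P}_n^{\leq\kappa}}$, and the invariants~(i), (ii) transfer to $X^{\alpha+1}$ by pulling the nice subresolutions of $\bar Z_k$ back along the fixed resolution of $X_k$ modulo the one for $X^\alpha_k$. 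I build $\bar Z=\bigcup_{j<\omega}\bar Z^{(j)}$ as a countable union, with $\bar Z^{(0)}$ the smallest subcomplex of $C$ containing $\bar x$. Passing from $\bar Z^{(j)}$ to $\bar Z^{(j+1)}$, I alternate two operations and close under boundaries: (a) degreewise, apply Lemma~\ref{lemma2} inside the induced fixed resolution of $C_k$ to enlarge $\bar Z^{(j)}_k$ to a submodule in $\mathcal{P}_n^{\leq\kappa}$ whose nice subresolution extends the one chosen at stage $j$; (b) using exactness of $C$, for each cycle in $\bar Z^{(j)}_k$ choose a preimage in $C_{k+1}$ and adjoin it. Both operations preserve smallness of each term, so the $\omega$-union $\bar Z$ has small terms; the compatibly chosen nice subresolutions assemble into small projective resolutions of length $n$ for each $\bar Z_k$, placing it in $\mathcal{P}_n^{\leq\kappa}$; and every cycle in $\bar Z$ has been killed at some finite stage by~(b), giving exactness.

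The main obstacle I expect is coordinating operations~(a) and~(b) so that they remain mutually compatible under $\omega$-iteration: adjoining preimages in~(b) produces generators that must subsequently be absorbed into an enlarged nice subresolution in~(a), while enlarging the subresolution in~(a) may create new elements whose boundaries are cycles that~(b) must then lift. The ``extends the previous subresolution'' clause in Lemma~\ref{lemma2} is precisely the tool that keeps the two operations coherent across stages, letting the colimit of nice subresolutions itself be a nice subresolution, in the same spirit as the zig-zag arguments of Lemmas~\ref{Kaplansky} and~\ref{lemma2}.
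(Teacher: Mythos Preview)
Your proposal is correct and follows essentially the same approach as the paper: both build the successor step via a countable zig-zag that alternates between enlarging terms to acquire small nice subresolutions (Lemma~\ref{lemma2}, including its ``extends the previous'' clause) and lifting cycles using exactness of the ambient complex, then take the union so that the compatibly nested nice subresolutions assemble to witness membership in $\mathcal{P}_n^{\leq\kappa}$. The only organizational difference is that the paper first builds a bounded-above $Y^1\in{\rm dw}\widetilde{\mathcal{P}_n^{\leq\kappa}}$ and then corrects exactness one degree $p$ at a time (iterating over all $p$), whereas you interleave operations~(a) and~(b) at all degrees simultaneously and work directly in the quotient $C=X/X^\alpha$ before pulling back; your formulation is arguably cleaner and avoids the slightly informal ``repeat this argument to get exactness at any level'' step in the paper.
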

\begin{proof} Let $X \in {\rm ex}\widetilde{\mathcal{P}_n}$. We construct a ${\rm ex}\widetilde{\mathcal{P}_n^{\leq \kappa}}$-filtration of $X$ using transfinite induction. For $\alpha = 0$ set $X^0 = 0$. For the case $\alpha = 1$ let $m \in \mathbb{Z}$ be arbitrary and let $T_1 \subseteq X_m$ be a small submodule of $X_m$. Then there exists a small submodule $Y^1_m$ of $X_m$ such that $T_1 \subseteq Y^1_m$ and that $Y^1_m$ has a small and nice projective subresolution of a given resolution of $X_m$. Note that $\partial_m(Y^1_m)$ is a submodule of $X_{m-1}$ with cardinality $\leq \kappa$, so there exists a submodule $Y^1_{m-1}$ of $X_{m-1}$ such that $\partial_m(Y^1_m) \subseteq Y^1_{m-1}$ and that $Y^1_{m-1}$ has a small and nice projective subresolution of a given resolution of $X_{m-1}$. Keep repeating this argument infinitely many times. We obtain a complex \[ Y^1 = \cdots \longrightarrow 0 \longrightarrow Y^1_m \longrightarrow Y^1_{m-1} \longrightarrow \cdots \] which is a subcomplex of $X$ and $Y^1 \in {\rm dw}\widetilde{\mathcal{P}^{\leq \kappa}_n}$.  Note that $Y^1$ is not necessarily exact. We shall construct a complex $X^1$ from $Y^1$ such that $X^1 \subseteq X$ and $X^1 \in {\rm ex}\widetilde{\mathcal{P}^{\leq \kappa}_n}$. The rest of this proof uses an argument similar to the one used in \cite[Theorem 4.6]{Rada}. Fix any $p \in \mathbb{Z}$. Then ${\rm Card}(Y^1_p) \leq \kappa$ and so ${\rm Card}(Z_p Y^1) \leq \kappa$. Since $X$ is exact and ${\rm Card}(Z_p Y^1) \leq \kappa$, there exists a submodule $U \subseteq X_{p+1}$ with ${\rm Card}(U) \leq \kappa$ such that $Z_p Y^1 \subseteq \partial_{p+1}(U)$. Let $C^1$ be a small subcomplex of $X$ such that $U \subseteq C_{p+1}$, $C_j = 0$ for every $j > p+1$, and that each $C_j$ with $j \leq p$ has a small and nice projective subresolution of a given resolution of $X_j$. Since $Y^1 + C$ is a bounded above subcomplex of $X$, there exists a small subcomplex $Y^2$ of $X$ such that $Y^1 + C \subseteq Y^2$ and that each $Y^2_j$ has a small and nice projective subresolution of a given resolution of $X_j$. Note that $Z_p Y^1 \subseteq \partial_{p+1}(Y^2_{p+1})$. Construct $Y^3$ from $Y^2$ as we just constructed $Y^2$ from $Y^1$, and so on, making sure to use the same $p \in \mathbb{Z}$ at each step. Set $X^1 = \bigcup_{j = 1}^\infty Y^j \subseteq X$. Note that $X^1$ is exact at $p$. Repeat this argument to get exactness at any level. So we may assume that $X^1$ is an exact complex.  Every $X^1_k$ has a small and nice projective subresolution of the given resolution of $X_k$. For every $j$ one has a projective subresolution of the form \[ 0 \longrightarrow \bigoplus_{i \in I^j_{n}(k)} P^i_n(k) \longrightarrow \cdots \longrightarrow \bigoplus_{i \in I^j_1(k)} P^i_1(k) \longrightarrow \bigoplus_{i \in I^j_0(k)} P^i_0(k) \longrightarrow Y^j_k \longrightarrow 0, \] where $I^1_l(k) \subseteq I^2_l(k) \subseteq \cdots$ for every $0\leq l \leq n$, by Lemma \ref{lemma2}. If we take the union of all of the previous sequences, then we obtain the following exact sequence: \[ 0 \longrightarrow \bigoplus_{i \in \bigcup_{j\geq i} I^j_{n}(k)} P^i_n(k) \longrightarrow \cdots \longrightarrow \bigoplus_{i \in \bigcup_{j \geq 1} I^j_0(k)} P^i_0(k) \longrightarrow \bigcup_{j \geq 1} Y^j_k = X^1_j \longrightarrow 0, \] where $\bigcup_{j \geq 1} I^j_l(k) \subseteq I_l(k)$ for every $0 \leq l \leq n$. Therefore, $X^1\in {\rm ex}\widetilde{\mathcal{P}^{\leq \kappa}_n}$. Now consider the quotient complex \[ X / X^1 = \cdots \longrightarrow X_{k+1} / X^1_{k+1} \longrightarrow X_k / X^1_k \longrightarrow X_{k-1} / X^1_{k-1} \longrightarrow \cdots. \] Note that each $X_k / X^1_k$ is $n$-projective and that $X / X^1$ is exact. We apply the same procedure above to the complex $X / X^1$ in order to get a complex $X^2 / X^1 \subseteq X / X^1$ such that $X^2 / X^1 \in {\rm ex}\widetilde{\mathcal{P}_n^{\leq \kappa}}$. Note that $X^2$ is an exact complex since the class of exact complexes is closed under extensions, and so $X^2 \in {\rm ex}\widetilde{\mathcal{P}_n}$. The rest of the proof follows by using transfinite induction. \\
\end{proof}

It follows by Proposition \ref{propo} and the Eklof and Trlifaj Theorem that $({\rm ex}\widetilde{\mathcal{P}_n}, ({\rm ex}\widetilde{\mathcal{P}_n})^\perp)$ is a complete cotorsion pair cogenerated by ${\rm ex}\widetilde{\mathcal{P}_n^{\leq \kappa}}$.


\section{The staircase zig-zag argument}

In this section, we present another method to prove that the induced cotorsion pairs $({\rm dw}\widetilde{\mathcal{P}_n}, ({\rm dw}\widetilde{\mathcal{P}_n})^\perp)$ and $({\rm ex}\widetilde{\mathcal{P}_n}, ({\rm ex}\widetilde{\mathcal{P}_n})^\perp)$ are complete. This method also applies for the cotorsion pairs $({\rm dw}\widetilde{\mathcal{F}_n}, ({\rm dw}\widetilde{\mathcal{F}_n})^\perp)$ and $({\rm ex}\widetilde{\mathcal{F}_n}, ({\rm ex}\widetilde{\mathcal{F}_n})^\perp)$. From now on, let $\mathcal{A}$ denote either the class of projective modules or the class of flat modules, and let $\mathcal{A}_n$ denote the class of all modules $M$ having an $\mathcal{A}$-resolution of length $n$: \[ 0 \longrightarrow A_n \longrightarrow \cdots \longrightarrow A_1 \longrightarrow A_0 \longrightarrow M \longrightarrow 0, \] where $A_k \in \mathcal{A}$ for every $0 \leq k \leq n$. Note that $\mathcal{A}_0 = \mathcal{A}$. \\

As we mentioned before, in \cite{Gobel} it is proven that $(\mathcal{F}_n, \mathcal{F}^\perp_n)$ is a complete cotorsion pair. We give a simpler proof of this fact. \\

\begin{lemma}\label{lemma3} Let $M \in \mathcal{F}_n$ with a flat resolution \[ 0 \longrightarrow F_n \stackrel{f_n}\longrightarrow F_{n-1} \longrightarrow \cdots \longrightarrow F_1 \stackrel{f_1}\longrightarrow F_0 \stackrel{f_0}\longrightarrow M \longrightarrow 0 \mbox{ \ (1)} \] and let $N$ be a small submodule of $M$. Then there exists a flat subresolution \[ 0 \longrightarrow S'_n \longrightarrow \cdots \longrightarrow S'_1 \longrightarrow S'_0 \longrightarrow N' \longrightarrow 0 \] of (1) such that $S'_k$ is a small and pure submodule of $F_k$, for every $0 \leq k \leq n$, and such that $N \subseteq N'$. Moreover, if $N$ has a subresolution of (1) \[ 0 \longrightarrow S_n \longrightarrow \cdots \longrightarrow S_1 \longrightarrow S_0 \longrightarrow N \longrightarrow 0 \] where $S_k$ is a small and pure submodule of $F_k$, for every $0 \leq k \leq n$, then the above resolution of $N'$ can be constructed in such a way that it contains the resolution of $N$. 
\end{lemma}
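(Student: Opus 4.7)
The plan is to mimic the proof of Lemma \ref{lemma2}, replacing ``countably generated direct summand of a projective module'' by ``small pure submodule of a flat module.'' The single ingredient that makes the substitution work is the following standard fact: every submodule $L$ of a flat module $F$ with ${\rm Card}(L) \le \kappa$ is contained in a pure submodule $S \subseteq F$ with ${\rm Card}(S) \le \kappa$; since pure submodules of flat modules are flat and purity is preserved under directed unions of submodules, this is essentially the entire toolkit we need. In particular no noetherian hypothesis on $R$ will be required, in contrast to Lemma \ref{Kaplansky}.

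First pass. Given $N \subseteq M$ small, surjectivity of $f_0$ lets us pick a small $Y_0 \subseteq F_0$ with $f_0(\langle Y_0 \rangle) \supseteq N$; extend $\langle Y_0 \rangle$ to a small pure submodule $S^0_0 \subseteq F_0$. The submodule $K_0 := S^0_0 \cap \ker f_0 = S^0_0 \cap f_1(F_1)$ is small, so choosing a small preimage set $Y_1 \subseteq F_1$ for a generating set of $K_0$ and extending $\langle Y_1 \rangle$ to a small pure $S^0_1 \subseteq F_1$ gives $f_1(S^0_1) \supseteq K_0$. Iterating produces small pure $S^0_k \subseteq F_k$ for $0 \le k \le n$ with $f_{k+1}(S^0_{k+1}) \supseteq S^0_k \cap \ker f_k$, the case $k = n$ being vacuous since $f_n$ is injective.

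Zig-zag and union. The dual containment $f_{k+1}(S^0_{k+1}) \subseteq S^0_k$ can fail, because purifying $S^0_{k+1}$ may introduce elements whose images in $F_k$ escape $S^0_k$. To repair this, inductively choose $S^{j+1}_k$ to be a small pure submodule of $F_k$ containing both $S^j_k$ and $f_{k+1}(S^j_{k+1})$, and then enlarge $S^{j+1}_{k+1}$ to cover the newly appearing elements of $S^{j+1}_k \cap \ker f_k$ by adjoining further small pure preimages. Setting $S'_k := \bigcup_{j \ge 0} S^j_k$, each $S'_k$ is small and pure in $F_k$ (hence flat), satisfies $f_{k+1}(S'_{k+1}) \subseteq S'_k$, and every element of $S'_k \cap \ker f_k$ is hit by $f_{k+1}(S'_{k+1})$. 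Defining $N' := f_0(S'_0) \supseteq N$ then yields the desired exact flat subresolution of (1).

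Moreover part and main obstacle. For the refinement, if $N$ already carries a subresolution $0 \to S_n \to \cdots \to S_0 \to N \to 0$ with each $S_k$ small and pure in $F_k$, simply begin the zig-zag with $S^0_k \supseteq S_k$; since all differentials are restrictions of the $f_k$, the given resolution of $N$ is then contained in the new resolution of $N'$ as a subresolution. The main technical point is verifying that the countable zig-zag closes up to genuine exactness rather than mere approximate exactness, but this is pure bookkeeping: every element placed in $S'_k \cap \ker f_k$ appears in some $S^j_k$ and has a preimage built into $S^{j+1}_{k+1}$ at the very next stage, and similarly every element of $S'_{k+1}$ eventually has its $f_{k+1}$-image absorbed into some $S^{j'}_k$. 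The preservation of purity in the directed union is the one non-trivial invariant, and it is a standard closure property of pure submodules of a flat module.
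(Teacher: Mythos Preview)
Your proposal is correct and follows essentially the same route as the paper: cite the standard fact that every small submodule of a flat module sits inside a small pure submodule, run the zig-zag up and down through the resolution, and take the directed union, using that purity passes to directed unions so that each $S'_k$ (and hence each $F_k/S'_k$) is flat. The only cosmetic difference is in the ``moreover'' clause: the paper passes to the quotient resolution of $M/N$ and applies the first part there, whereas you simply seed the zig-zag with $S^0_k \supseteq S_k$; both arguments give a subresolution of $N'$ containing the given one for $N$.
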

\begin{proof} First, note that for every flat module $F$ and for every small submodule $N \subseteq F$, there exists a small and pure submodule $S \subseteq F$ such that $N \subseteq S$ (for a proof of this, see \cite[Lemma 5.3.12]{Enochs}). For every $x \in N$ there exists $y_x \in F_0$ such that $x = f_0(y_x)$. Let $Y$ be the set $\{ y_x \mbox{ : } x \in N \mbox{ and } f_0(y_x) = x \}$ and consider the submodule $\left< Y \right> \subseteq F_0$. Since $\left< Y \right>$ is small, there exists a small pure submodule $S_0(1) \subseteq F_0$ such that $\left< Y \right> \subseteq S_0(1)$. Note that $f_0(S_0(1)) \supseteq N$. Now consider ${\rm Ker}\left( f_0|_{S_0(1)} \right)$ and let $A$ be a set of preimages of ${\rm Ker}\left( f_0|_{S_0(1)} \right)$ such that $f_1\left( \left< A \right> \right) \supseteq {\rm Ker}\left( f_0|_{S_0(1)} \right)$. It is easy to see that $\left< A \right>$ is a small submodule of $F_1$, so we can embed it into a small pure submodule $S_1(1) \subseteq F_1$. Hence we have $f_1(S_1(1)) \supseteq {\rm Ker}\left( f_0|_{S_0(1)} \right)$. Now consider ${\rm Ker}\left( f_1|_{S_1(1)} \right)$ and repeat the same process above in order to find a small pure submodule $S_2(1) \subseteq F_2$ such that $f_2(S_2(1)) \supseteq {\rm Ker}\left( f_1|_{S_1(1)} \right)$. Keep doing this until find a small pure submodule $S_n(1) \subseteq F_n$ such that $f_n(S_n(1)) \supseteq {\rm Ker}\left( f_{n-1}|_{S_{n-1}(1)} \right)$. Now $f_n(S_n(1))$ is a small submodule of $F_{n-1}$, so there is a small pure submodule $S_{n-1}(2) \subseteq F_{n-1}$ such that $f_n(S_n(1)) \subseteq S_{n-1}(2)$. Repeat this process until find a small pure submodule $S_0(2) \subseteq F_0$ such that $f_1(S_1(2)) \subseteq S_0(2)$. If we now consider ${\rm Ker}\left( f_0|_{S_0(2)} \right) \subseteq F_0$, we repeat the same argument above to find a small pure submodule $S_1(3) \subseteq F_1$ such that $f_1(S_1(3)) \supseteq {\rm Ker}\left( f_0|_{S_0(2)} \right)$. Keep repeating this zig-zag procedure infinitely many times and set $S_k = \bigcup_{i \geq 1} S_k(i)$, for every $0 \leq k \leq n$. Note that each $S_k$ is a pure submodule of $F_k$. By construction, we get an exact complex \[ 0 \longrightarrow S_n \longrightarrow S_{n-1} \longrightarrow \cdots \longrightarrow S_1 \longrightarrow S_0 \longrightarrow Q \longrightarrow 0, \mbox{ \ \ \ (2)} \] where $Q = {\rm CoKer}(f_1|_{S_1}) \subseteq M$. If we take the quotient of (1) by (2), we get an exact complex \[ 0 \longrightarrow F_n / S_n \longrightarrow F_{n-1} / S_{n-1} \longrightarrow \cdots \longrightarrow F_1 / S_1 \longrightarrow F_0 / S_0 \longrightarrow M / Q \longrightarrow 0. \] \newpage Since each $S_k$ is a pure submodule of $F_k$, we know that $S_k$ and $F_k / S_k$ are flat modules. Therefore, $Q$ is a small $n$-flat submodule with $N \subseteq Q$ such that $M / Q$ is also $n$-flat. The rest of the proof follows as in Lemma \ref{lemma2}. \\ 
\end{proof}

\begin{remark} From Lemma \ref{lemma2} and Lemma \ref{lemma3}, we have that for every $A \in \mathcal{A}_n$ and for every small submodule $0 \neq N \subseteq A$, there exists a small submodule $A' \subseteq A$ in $\mathcal{A}$ such that $N \subseteq A'$ and $A / A' \in \mathcal{A}$. \\
\end{remark}

\begin{theorem}\label{metodo2} Let $X \in {\rm ex}\widetilde{\mathcal{A}_n}$ and let $x \in X$ (i.e. $x \in X_m$ for some $m \in \mathbb{Z}$). Then there exists a small complex $Y \in {\rm ex}\widetilde{\mathcal{A}^{\leq \kappa}_n}$ such that $x \in Y$ and $X / Y \in {\rm ex}\widetilde{\mathcal{A}_n}$.  \\
\end{theorem}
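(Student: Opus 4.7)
The plan is to construct $Y$ as the union of a countable chain $Y^1 \subseteq Y^2 \subseteq \cdots$ of small subcomplexes of $X$, each controlled through a fixed choice of nice resolutions. First, fix for each $k \in \mathbb{Z}$ an $\mathcal{A}$-resolution of length $n$, say $0 \to A_n(k) \to \cdots \to A_0(k) \to X_k \to 0$, of $X_k$. By Lemma \ref{lemma2} (projective case) and Lemma \ref{lemma3} (flat case), every small submodule $N \subseteq X_k$ is contained in a small submodule $S \subseteq X_k$ lying in $\mathcal{A}_n^{\leq \kappa}$ that is equipped with a \emph{nice subresolution} of the fixed resolution; moreover, quotienting the fixed resolution by such a nice subresolution still yields an $\mathcal{A}$-resolution of length $n$ (summands over the complementary index set in the projective case, quotients by a pure submodule in the flat case). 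Starting from $\langle x \rangle \subseteq X_m$ and applying the lemma repeatedly downward through the boundary maps, I obtain a small, bounded-above subcomplex $Y^1 \subseteq X$ with $x \in Y^1_m$, $Y^1_k = 0$ for $k > m$, and each $Y^1_k \in \mathcal{A}_n^{\leq \kappa}$ with a nice subresolution of the fixed resolution of $X_k$.

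Next, to force exactness, iterate a staircase zig-zag. Given $Y^j$ satisfying the bookkeeping above, construct $Y^{j+1} \supseteq Y^j$ as follows. For every $p \in \mathbb{Z}$, the cycle module $Z_p(Y^j) \subseteq Z_p(X) = \partial_{p+1}(X_{p+1})$ is small, so pick a small $U_p \subseteq X_{p+1}$ with $\partial_{p+1}(U_p) \supseteq Z_p(Y^j)$. The \emph{staircase} part: at degree $p+1$, enlarge $Y^j_{p+1} + U_p$ to a small $Y^{j+1}_{p+1} \subseteq X_{p+1}$ whose nice subresolution \emph{extends} the one of $Y^j_{p+1}$, which is available via the ``Moreover'' clauses of Lemmas \ref{lemma2} and \ref{lemma3}. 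The \emph{zig-zag} part: $\partial_{p+1}(Y^{j+1}_{p+1})$ may escape $Y^j_p$, so enlarge $Y^j_p$ similarly to a $Y^{j+1}_p$ containing it, and propagate this growth downward through all lower degrees. Perform the above simultaneously at stage $j$ for every $p \in \mathbb{Z}$; since only countably many degrees are touched and each enlargement is small, the resulting $Y^{j+1}$ remains small.

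Finally, set $Y := \bigcup_{j \geq 1} Y^j$. Each $Y_k$ is a countable directed union of small submodules, hence small, and the nested nice subresolutions assemble into a nice subresolution of the fixed resolution of $X_k$ (by unioning the index subsets in the projective case, and by noting that a directed union of small pure submodules of $F_k$ is small and pure in the flat case), so $Y_k \in \mathcal{A}_n^{\leq \kappa}$. Exactness of $Y$ is automatic: any $z \in Z_p(Y)$ lies in some $Y^j_p$, whence $z \in \partial_{p+1}(U_p) \subseteq \partial_{p+1}(Y^{j+1}_{p+1}) \subseteq \partial_{p+1}(Y_{p+1})$. Quotienting each fixed resolution by the assembled nice subresolution of $Y_k$ produces a length-$n$ $\mathcal{A}$-resolution of $X_k/Y_k$, and the short exact sequence $0 \to Y \to X \to X/Y \to 0$ of complexes (with both $Y$ and $X$ exact) makes $X/Y$ exact, giving $X/Y \in {\rm ex}\widetilde{\mathcal{A}_n}$. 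The main obstacle is the combinatorial bookkeeping: at each stage one must handle every degree $p \in \mathbb{Z}$ simultaneously so that the final union is exact \emph{and} carries a coherent nice subresolution at every level — once this is arranged, smallness and the rest follow mechanically from countability.
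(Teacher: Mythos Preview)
Your proposal is correct and uses the same core ingredients as the paper: Lemmas \ref{lemma2} and \ref{lemma3} to produce nested nice subresolutions, lifting of cycles through the exactness of $X$, and passage to a countable union. The only difference is organizational. The paper keeps each intermediate complex $L^k$ \emph{exact} while fixing just one degree per step into $\mathcal{A}_n^{\leq\kappa}$, using the explicit staircase pattern $0,-1,0,1,0,-1,-2,\ldots$ to ensure every degree is visited infinitely often; you instead keep each intermediate $Y^j$ degreewise in $\mathcal{A}_n^{\leq\kappa}$ but not exact, handling all degrees at every stage and recovering exactness only in the limit. Both schemes yield the same union, and since your $Y^j$ is bounded above (by $m+j-1$), the top-down recursion defining $Y^{j+1}$ that your ``propagate downward'' step implicitly requires does terminate, so there is no hidden circularity.
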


The following proof is based on an argument given in \cite[Proposition 4.1]{Aldrich1}, where the authors prove that $({\rm dw}\widetilde{\mathcal{F}_0}, ({\rm dw}\widetilde{\mathcal{F}_0})^\perp)$ and $({\rm ex}\widetilde{\mathcal{F}_0}, ({\rm ex}\widetilde{\mathcal{F}_0})^\perp)$ are complete cotorsion pairs. We shall call this argument the {\bf staircase zig-zag}. \\

\begin{proof}[Proof of Theorem \ref{metodo2}] Assume without loss of generality that $x \in X_0$. Consider the submodule $\left< x \right> \subseteq X_0$. Since $X_0 \in \mathcal{A}$ and $\left< x \right>$ is small, we can embed $\left< x \right>$ into a submodule $\mathcal{A}^{\leq \kappa} \ni Y^1_0 \subseteq X_0$ such that $X_0 / Y^1_0 \in \mathcal{A}$. Since $X$ is exact, we can construct a small and exact subcomplex \[ L^1 \mbox{ = } \cdots L^1_2 \longrightarrow L^1_1 \longrightarrow Y^1_0 \longrightarrow \partial_0(Y^1_0) \longrightarrow 0 \longrightarrow \cdots. \] Since $\partial_0(Y^1_0)$ is small, there exists a submodule $\mathcal{A}^{\leq \kappa} \ni Y_{-1}^2 \subseteq X_{-1}$ such that $X_{-1} / Y_{-1}^2 \in \mathcal{A}$. Now construct a small and exact subcomplex \[ L^2 \mbox{ = } \cdots L^2_2 \longrightarrow L^2_1 \longrightarrow L^2_0 \longrightarrow Y^2_{-1} \longrightarrow \partial_{-1}(Y^2_{-1}) \longrightarrow 0 \longrightarrow \cdots. \] \newpage

Note that it is possible to construct $L^2$ containing $L^1$. Now embed $L^2_0$ into a submodule $\mathcal{A}^{\leq \kappa} \ni Y^3_0 \subseteq X_0$ such that $X_0 / Y^3_0 \in \mathcal{A}$. Again, construct a small and exact subcomplex \[ L^3 \mbox{ = } \cdots \longrightarrow L^3_2 \longrightarrow L^3_1 \longrightarrow Y^3_0 \longrightarrow Y^2_{-1} + \partial_0(Y_0^3) \longrightarrow \partial_{-1}(Y_{-1}^2) \longrightarrow 0 \longrightarrow \cdots \] containing $L^2$. Now let $Y^4_1 \in \mathcal{A}^{\leq \kappa}$ be a submodule of $X_1$ containing $L^3_1$ such that $X_1 / Y^4_1 \in \mathcal{A}$, and construct an exact and small complex \[ L^4 \mbox{ = } \cdots \longrightarrow L^4_2 \longrightarrow Y^4_1 \longrightarrow Y^3_0 + \partial_1(Y_1^4) \longrightarrow Y^2_{-1} + \partial_0(Y_0^3) \longrightarrow \partial_{-1}(Y^2_{-1}) \longrightarrow 0 \longrightarrow \cdots \] containing $L^3$. Embed $Y^3_0 + \partial_1(Y^4_0)$ into a submodule $\mathcal{A}^{\leq \kappa} \ni Y^5_0 \subseteq X_0$ such that $X_0 / Y^5_0 \in \mathcal{A}$. Construct an exact and small subcomplex \[ L^5 \mbox{ = } \cdots \longrightarrow L^5_2 \longrightarrow L^5_1 \longrightarrow Y^5_0 \longrightarrow Y^2_{-1} + \partial_0(Y^5_0) \longrightarrow \partial_{-1}(Y^2_{-1}) \longrightarrow 0 \longrightarrow \cdots \] containing $L^4$. In a similar way, construct small and exact complexes 
\begin{align*}
L^6 & = \cdots \longrightarrow L^6_1 \longrightarrow L^6_0 \longrightarrow Y^6_{-1} \longrightarrow \partial_{-1}(Y^6_{-1}) \longrightarrow 0 \longrightarrow \cdots, \\
L^7 & = \cdots \longrightarrow L^7_1 \longrightarrow L^7_0 \longrightarrow L^7_{-1} \longrightarrow Y^7_{-2} \longrightarrow \partial_{-2}(Y_{-2}^7) \longrightarrow 0 \longrightarrow \cdots,
\end{align*}
such that $Y_{-1}^6 \in \mathcal{A}$ is a small submodule of $X_{-1}$ containing $Y^6_{-1} + \partial_0(Y^5_0)$, and $Y_{-2}^7 \in \mathcal{A}$ is a small submodule of $X_{-2}$ containing $\partial_{-1}(Y_{-1}^6)$. We have the following commutative diagram of subcomplexes of $X$: \newpage
\[ \begin{tikzpicture}
\matrix (m) [matrix of math nodes, row sep=0.6em, column sep=1em]
{ 0 & 0 & 0 & 0 & 0 & 0 \\ L^1_2 & L^1_1 & Y^1_0 & \partial_0(Y^1_0) & 0 & 0 \\ L^2_2 & L^2_1 & L^2_0 & Y^2_{-1} & \partial_{-1}(Y^2_{-1}) & 0 \\ L^3_2 & L^3_1 & Y^3_0 & \partial_0(Y^3_0) + Y^2_{-1} & \partial_{-1}(Y^2_{-1}) & 0 \\ L^4_2 & Y^4_1 & Y^3_0 + \partial_1(Y^4_1) & \partial_0(Y^3_0) + Y^2_{-1} & \partial_{-1}(Y^2_{-1}) & 0 \\ L^5_2 & L^5_1 & Y^5_0 & Y^2_{-1} + \partial_0(Y^5_0) & \partial_{-1}(Y^2_{-1}) & 0 \\ L^6_2 & L^6_1 & L^6_0 & Y^6_{-1} & \partial_{-1}(Y^6_{-1}) & 0 \\ L^7_2 & L^7_1 & L^7_0 & L^7_{-1} & Y^7_{-2} & \partial_{-2}(Y^7_{-2}) \\ L^8_2 & L^8_{1} & L^8_0 & Y^8_{-1} & \partial_{-1}(Y^8_{-1}) + Y^7_{-2} & \partial_{-2}(Y^7_{-2}) \\ L^9_2 & L^9_1 & Y^9_0 & \partial_0(Y^9_0) + Y^8_{-1} & \partial_{-1}(Y^8_{-1}) + Y^7_{-2} & \partial_{-2}(Y^7_{-2}) \\ L^{10}_2 & Y^{10}_{1} & \partial_1(Y^{10}_1) + Y^9_0 & \partial_0(Y^9_0) + Y^8_{-1} & \partial_{-1}(Y^8_{-1}) + Y^7_{-2} & \partial_{-2}(Y^7_{-2}) \\ Y^{11}_2 & \partial_2(Y^{11}_2) + Y^{10}_{1} & \partial_1(Y^{10}_1) + Y^9_0 & \partial_0(Y^9_0) + Y^8_{-1} & \partial_{-1}(Y^8_{-1}) + Y^7_{-2} & \partial_{-2}(Y^7_{-2}) \\ \vdots & \vdots & \vdots & \vdots & \vdots & \vdots \\ };
\path[->]
(m-1-1) edge (m-2-1) (m-1-2) edge (m-2-2) (m-1-3) edge (m-2-3) (m-1-4) edge (m-2-4) (m-1-5) edge (m-2-5) (m-1-6) edge (m-2-6)
(m-2-1) edge (m-2-2) edge (m-3-1) (m-2-2) edge (m-2-3) edge (m-3-2) (m-2-3) edge (m-2-4) edge (m-3-3) (m-2-4) edge (m-2-5) edge (m-3-4) (m-2-5) edge (m-2-6) edge (m-3-5) (m-2-6) edge (m-3-6)
(m-3-1) edge (m-3-2) edge (m-4-1) (m-3-2) edge (m-3-3) edge (m-4-2) (m-3-3) edge (m-3-4) edge (m-4-3) (m-3-4) edge (m-3-5) edge (m-4-4) (m-3-5) edge (m-3-6) edge (m-4-5) (m-3-6) edge (m-4-6)
(m-4-1) edge (m-4-2) edge (m-5-1) (m-4-2) edge (m-4-3) edge (m-5-2) (m-4-3) edge (m-4-4) edge (m-5-3) (m-4-4) edge (m-4-5) edge (m-5-4) (m-4-5) edge (m-4-6) edge (m-5-5) (m-4-6) edge (m-5-6)
(m-5-1) edge (m-5-2) edge (m-6-1) (m-5-2) edge (m-5-3) edge (m-6-2) (m-5-3) edge (m-5-4) edge (m-6-3) (m-5-4) edge (m-5-5) edge (m-6-4) (m-5-5) edge (m-5-6) edge (m-6-5) (m-5-6) edge (m-6-6)
(m-6-1) edge (m-6-2) edge (m-7-1) (m-6-2) edge (m-6-3) edge (m-7-2) (m-6-3) edge (m-6-4) edge (m-7-3) (m-6-4) edge (m-6-5) edge (m-7-4) (m-6-5) edge (m-6-6) edge (m-7-5) (m-6-6) edge (m-7-6)
(m-7-1) edge (m-7-2) edge (m-8-1) (m-7-2) edge (m-7-3) edge (m-8-2) (m-7-3) edge (m-7-4) edge (m-8-3) (m-7-4) edge (m-7-5) edge (m-8-4) (m-7-5) edge (m-7-6) edge (m-8-5) (m-7-6) edge (m-8-6)
(m-8-1) edge (m-8-2) edge (m-9-1) (m-8-2) edge (m-8-3) edge (m-9-2) (m-8-3) edge (m-8-4) edge (m-9-3) (m-8-4) edge (m-8-5) edge (m-9-4) (m-8-5) edge (m-8-6) edge (m-9-5) (m-8-6) edge (m-9-6)
(m-9-1) edge (m-9-2) edge (m-10-1) (m-9-2) edge (m-9-3) edge (m-10-2) (m-9-3) edge (m-9-4) edge (m-10-3) (m-9-4) edge (m-9-5) edge (m-10-4) (m-9-5) edge (m-9-6) edge (m-10-5) (m-9-6) edge (m-10-6)
(m-10-1) edge (m-10-2) edge (m-11-1) (m-10-2) edge (m-10-3) edge (m-11-2) (m-10-3) edge (m-10-4) edge (m-11-3) (m-10-4) edge (m-10-5) edge (m-11-4) (m-10-5) edge (m-10-6) edge (m-11-5) (m-10-6) edge (m-11-6)
(m-11-1) edge (m-11-2) edge (m-12-1) (m-11-2) edge (m-11-3) edge (m-12-2) (m-11-3) edge (m-11-4) edge (m-12-3) (m-11-4) edge (m-11-5) edge (m-12-4) (m-11-5) edge (m-11-6) edge (m-12-5) (m-11-6) edge (m-12-6)
(m-12-1) edge (m-12-2) edge (m-13-1) (m-12-2) edge (m-12-3) edge (m-13-2) (m-12-3) edge (m-12-4) edge (m-13-3) (m-12-4) edge (m-12-5) edge (m-13-4) (m-12-5) edge (m-12-6) edge (m-13-5) (m-12-6) edge (m-13-6);
\end{tikzpicture} \]
where the $k+1$-th complex can be constructed in such a way that it contains the $k$-th complex. Note that the submodules $S^k_i$ appear according to the following pattern: \newpage 
\[ \begin{tikzpicture}
\matrix (m) [matrix of math nodes, row sep=0.5em, column sep=2em]
{ & 2 & 1 & 0 & -1 & -2 \\ 1\mbox{st step} & & & \bullet \\ 2\mbox{nd step} & & & & \bullet \\ 3\mbox{rd step} & & & \bullet \\ 4\mbox{th step} & & \bullet \\ 5\mbox{th step} & & & \bullet \\ 6\mbox{th step} & & & & \bullet \\ 7\mbox{th step} & & & & & \bullet \\ 8\mbox{th step} & & & & \bullet \\ 9\mbox{th step} & & & \bullet \\ 10\mbox{th step} & & \bullet \\ 11\mbox{th step} & \bullet \\ };
\path[->]
(m-2-4) edge (m-3-5) (m-3-5) edge (m-4-4) (m-4-4) edge (m-5-3) (m-5-3) edge (m-6-4) (m-6-4) edge (m-7-5) (m-7-5) edge (m-8-6) (m-8-6) edge (m-9-5) (m-9-5) edge (m-10-4) (m-10-4) edge (m-11-3) (m-11-3) edge (m-12-2);
\end{tikzpicture} \]
Let $Y = \bigcup_{n \geq 1} L^n$, where $Y_i = \bigcup_{n \geq 1} (L^n)_i$. It is clear that $Y$ is an exact complex. We check that $Y$ is also a ${\rm dw}\widetilde{\mathcal{A}_n}$. For example, consider \[ Y_0 = Y^1_0 \cup L^2_0 \cup Y^3_0 \cup (Y^3_0 + \partial_1(Y^4_1)) \cup Y^5_0 \cup \cdots = Y^1_0 \cup Y^3_0 \cup Y^5_0 \cup \cdots. \] It is clear that $Y_0$ is small. At this point, we split the proof in two cases: 

\begin{itemize}
\item[ (1)] $\mathcal{A} = \mathcal{P}_0$: Consider a projective resolution of $X_0$ of length $n$, say \[ 0 \longrightarrow \bigoplus_{i \in I_n} P^i_n \longrightarrow \cdots \longrightarrow \bigoplus_{i \in I_1} P^i_1 \longrightarrow \bigoplus_{i \in I_0} P^i_0 \longrightarrow X_0 \longrightarrow 0 \mbox{ \ (1)}, \] where each direct sum is a direct sum of countably generated projective modules. By Lemma \ref{lemma2}, we can construct $Y^1_0$ containing $\left< x \right>$ with a subresolution \[ 0 \longrightarrow \bigoplus_{i \in I^1_n} P^i_n \longrightarrow \cdots \longrightarrow \bigoplus_{i \in I^1_1} P^i_1 \longrightarrow \bigoplus_{i \in I^1_0} P^i_0 \longrightarrow Y^1_0 \longrightarrow 0 \mbox{ \ (2)}, \] where each $I^1_k$ is a small subset of $I_k$. Note that the quotient of (1) by (2) yields a projective resolution of $X_0 / Y^1_0$ of length $n$, so $X_0 / Y^1_0 \in \mathcal{P}_n$. Using Lemma \ref{lemma2} again, we can construct a subresolution \[ 0 \longrightarrow \bigoplus_{i \in I^3_n} P^i_n \longrightarrow \cdots \longrightarrow \bigoplus_{i \in I^3_1} P^i_1 \longrightarrow \bigoplus_{i \in I^3_0} P^i_0 \longrightarrow Y^3_0 \longrightarrow 0 \mbox{ (3)} \] containing (2) such that $X_0 / Y^3_0 \in \mathcal{P}_n$. We keep applying Lemma \ref{lemma2} to get an ascending chain of subresolutions of (1):
\begin{align*}
0 & \longrightarrow \bigoplus_{i \in I^1_n} P^i_n \longrightarrow \cdots \longrightarrow \bigoplus_{i \in I^1_1} P^i_1 \longrightarrow \bigoplus_{i \in I^1_0} P^i_0 \longrightarrow Y^1_0 \longrightarrow 0 \\
0 & \longrightarrow \bigoplus_{i \in I^3_n} P^i_n \longrightarrow \cdots \longrightarrow \bigoplus_{i \in I^3_1} P^i_1 \longrightarrow \bigoplus_{i \in I^3_0} P^i_0 \longrightarrow Y^3_0 \longrightarrow 0 \\
0 & \longrightarrow \bigoplus_{i \in I^5_n} P^i_n \longrightarrow \cdots \longrightarrow \bigoplus_{i \in I^5_1} P^i_1 \longrightarrow \bigoplus_{i \in I^5_0} P^i_0 \longrightarrow Y^5_0 \longrightarrow 0 \\
& \ \ \ \ \ \ \ \ \ \ \ \ \ \ \ \ \ \ \ \ \ \ \ \ \ \ \ \ \ \ \ \ \ \ \vdots 
\end{align*}
Now we take the union of this ascending chain and get an exact complex
\begin{align*}
0 & \longrightarrow \bigcup_j \bigoplus_{i \in I^j_n} P^i_n \longrightarrow \cdots \longrightarrow \bigcup_j \bigoplus_{i \in I^j_1} P^i_1 \longrightarrow \bigcup_j \bigoplus_{i \in I^j_0} P^i_0 \longrightarrow \bigcup_j Y^j_0 \longrightarrow 0 \\
& \ \ \ \ \ \ \ \ \ \ \ \ \ \ \ \ \ \ \ \ \ \ \ \ \ \ \ \ \ \ \ \ \ \ \ \ \ \ \ \ \ \ = \\
0 & \longrightarrow \bigoplus_{i \in \bigcup_j I^j_n} P^i_n \longrightarrow \cdots \longrightarrow \bigoplus_{i \in \bigcup_j I^j_1} P^i_1 \longrightarrow \bigoplus_{i \in \bigcup_j I^j_0} P^i_0 \longrightarrow Y_0 \longrightarrow 0 \mbox{ \ (4)}
\end{align*}
\newpage Since each $\bigcup_j I^j_k$ is a small subset of $I_k$, we have that the previous sequence is a $\mathcal{P}^{\leq \kappa}_0$-subresolution of (1). Note also that the quotient of (1) by (4) yields a projective resolution of $X_0 / Y_0$ of length $n$. Then $Y_0 \in \mathcal{P}^{\leq \kappa}_n$. In a similar way, we can show that $Y_m \in \mathcal{P}^{\leq \kappa}_n$ and $X_m / Y_m \in \mathcal{P}_n$, for every $m \in \mathbb{Z}$. Hence, $Y \in {\rm ex}\widetilde{\mathcal{P}^{\leq \kappa}_n}$. Note also that the quotient of exact complexes is exact, so we also have $X / Y \in {\rm ex}\widetilde{\mathcal{P}^{\leq \kappa}_n}$. 

\item[ (2)] $\mathcal{A} = \mathcal{F}_0$: Consider a flat resolution of $X_0$ of length $n$, say \[ 0 \longrightarrow F_n \longrightarrow \cdots \longrightarrow F_1 \longrightarrow F_0 \longrightarrow X_0 \longrightarrow 0 \mbox{ \ (1')}. \] By Lemma \ref{lemma3}, we can construct a subresolution \[ 0 \longrightarrow S^1_n \longrightarrow \cdots \longrightarrow S^1_1 \longrightarrow S^1_0 \longrightarrow Y^1_0 \longrightarrow 0, \] where $\left< x \right> \subseteq Y^1_0$, and each $S^1_k$ is a small and pure submodule of $F_k$. As we did in the previous case, applying Lemma \ref{lemma3} infinitely many times, we can get an ascending chain of subresolutions
\begin{align*}
0 & \longrightarrow S^1_n \longrightarrow \cdots \longrightarrow S^1_1 \longrightarrow S^1_0 \longrightarrow Y^1_0 \longrightarrow 0 \\
0 & \longrightarrow S^3_n \longrightarrow \cdots \longrightarrow S^3_1 \longrightarrow S^3_0 \longrightarrow Y^3_0 \longrightarrow 0 \\
0 & \longrightarrow S^5_n \longrightarrow \cdots \longrightarrow S^5_1 \longrightarrow S^5_0 \longrightarrow Y^5_0 \longrightarrow 0 \\
& \ \ \ \ \ \ \ \ \ \ \ \ \ \ \ \ \ \ \ \ \ \ \ \ \ \ \ \ \vdots 
\end{align*}
Taking the union of these subresolutions yields an exact sequence \[ 0 \longrightarrow \bigcup_j S^j_n \longrightarrow \cdots \longrightarrow \bigcup_j S^j_1 \longrightarrow \bigcup_j S^j_0 \longrightarrow Y_0 \longrightarrow 0 \mbox{ \ (2')}, \] where each $\bigcup_j S^j_k$ is a small and pure submodule of $F_k$, and so it is flat and the quotient $F_k / \bigcup_j S^j_k$ is also flat. Then we have that $Y_0 \in \mathcal{F}^{\leq \kappa}_n$ and $X_0 / Y_0 \in \mathcal{F}_n$ (take the quotient of (1') by (2')). In a similar way, we can show that $Y_m \in \mathcal{F}^{\leq \kappa}_n$ and $X_m / Y_m \in \mathcal{F}_n$, for every $m \in \mathcal{Z}$. It follows $Y \in {\rm ex}\widetilde{\mathcal{F}^{\leq \kappa}_n}$ and $X / Y \in {\rm ex}\widetilde{\mathcal{F}_n}$. 
\end{itemize}
In both cases, we have constructed a subcomplex $Y \subseteq X$ containing $x$ such that $Y \in {\rm ex}\widetilde{\mathcal{A}^{\leq \kappa}_n}$ and $X / Y \in {\rm ex}\widetilde{\mathcal{A}_n}$. \\
\end{proof}

\begin{remark} Note that using the staircase zig-zag method, there is no need to assume that $R$ is right noetherian. 
\end{remark}


\section{dw-$n$-projective and dw-$n$-flat model structures}

In this section we obtain two model structures on $\Ch$, one from the two complete cotorsion pairs $({\rm dw}\widetilde{\mathcal{P}_n}, ({\rm dw}\widetilde{\mathcal{P}_n})^\perp)$ and $({\rm ex}\widetilde{\mathcal{P}_n}, ({\rm ex}\widetilde{\mathcal{P}_n})^\perp)$, and the other one from $({\rm dw}\widetilde{\mathcal{F}_n}, ({\rm dw}\widetilde{\mathcal{F}_n})^\perp)$ and $({\rm ex}\widetilde{\mathcal{F}_n}, ({\rm ex}\widetilde{\mathcal{F}_n})^\perp)$. \\

Recall that a subcategory $\mathcal{D}$ of an abelian category $\mathcal{C}$ is said to be {\bf thick} if the following two conditions are satisfied:
\begin{itemize}
\item[ (a)] $\mathcal{D}$ is {\bf closed under retracts}, i.e., given a sequence \[ D' \stackrel{f}\longrightarrow D \stackrel{g}\longrightarrow D' \] with $g \circ f = {\rm id}_{D'}$ and $D \in \mathcal{D}$, then $D' \in \mathcal{B}$. 

\item[ (b)] If two out of three of the terms in a short exact sequence \[ 0 \longrightarrow D'' \longrightarrow D \longrightarrow D' \longrightarrow 0 \] are in $\mathcal{D}$, then so is the third. \\
\end{itemize} 
For example, the class $\mathcal{E}$ of exact complexes is thick. \\

The following theorem by Hovey describes how to get an abelian model structure from two complete cotorsion pairs. 

\begin{theorem}{\rm \cite[Theorem 2.2]{Hovey2}}\label{hovey} Let $\mathcal{C}$ be a bicomplete abelian category with enough projective and injective objects, and let $(\mathcal{A}, \mathcal{B} \cap \mathcal{E})$ and $(\mathcal{A} \cap \mathcal{E}, \mathcal{B})$ be two complete cotorsion pairs in $\mathcal{C}$ such that the class $\mathcal{E}$ is thick. Then there is a unique abelian model structure on $\mathcal{C}$ such that $\mathcal{A}$ is the class of cofibrant objects, $\mathcal{B}$ is the class of fibrant objects, and $\mathcal{E}$ is the class of trivial objects. \\ 
\end{theorem}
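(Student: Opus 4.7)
The plan is to produce the three classes of maps directly from the cotorsion-pair data and then check the model-category axioms. Specifically, I would declare that a map $f$ is a \textbf{cofibration} iff it is a monomorphism with $\mathrm{coker}(f) \in \mathcal{A}$, a \textbf{fibration} iff it is an epimorphism with $\ker(f) \in \mathcal{B}$, and a \textbf{weak equivalence} iff it admits a factorization $f = p\circ i$ with $i$ a monomorphism whose cokernel lies in $\mathcal{A}\cap\mathcal{E}$ and $p$ an epimorphism whose kernel lies in $\mathcal{B}\cap\mathcal{E}$. A trivial cofibration (resp.\ trivial fibration) then turns out to be a monomorphism with cokernel in $\mathcal{A}\cap\mathcal{E}$ (resp.\ epimorphism with kernel in $\mathcal{B}\cap\mathcal{E}$). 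With these definitions, the first consistency check is that an object is cofibrant exactly when it lies in $\mathcal{A}$, fibrant exactly when it lies in $\mathcal{B}$, and trivial exactly when $0\to X$ is a weak equivalence, which forces $X\in\mathcal{E}$ (using that $\mathcal{E}$ is closed under extensions).

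Next I would verify the factorization axiom directly from completeness of the two cotorsion pairs. Given any map $f:X\to Y$, build a pushout/pullback trick: apply the existence of \textquotedblleft enough projectives\textquotedblright\ for $(\mathcal{A},\mathcal{B}\cap\mathcal{E})$ to $Y$ to obtain a short exact sequence $0\to B\to A\to Y\to 0$ with $A\in\mathcal{A}$ and $B\in\mathcal{B}\cap\mathcal{E}$; pulling this back along $f$ and composing yields the desired (cofibration, trivial fibration) factorization of $f$. The (trivial cofibration, fibration) factorization is produced symmetrically using the completeness of $(\mathcal{A}\cap\mathcal{E},\mathcal{B})$ together with the \textquotedblleft enough injectives\textquotedblright\ half and a pushout. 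The lifting axiom follows from the standard Ext-to-lifting correspondence: given a trivial cofibration $i$ with cokernel $C\in\mathcal{A}\cap\mathcal{E}$ and a fibration $p$ with kernel $K\in\mathcal{B}$, the vanishing $\mathrm{Ext}^1(C,K)=0$ (which is exactly the orthogonality in the cotorsion pair $(\mathcal{A}\cap\mathcal{E},\mathcal{B})$) gives the required diagonal filler, and dually for cofibrations against trivial fibrations. Retract closure of cofibrations, fibrations, and weak equivalences is immediate because monomorphisms, epimorphisms, and the classes $\mathcal{A}, \mathcal{B}, \mathcal{A}\cap\mathcal{E}, \mathcal{B}\cap\mathcal{E}$ are each closed under retracts (the latter two using thickness of $\mathcal{E}$).

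The main obstacle will be the 2-out-of-3 axiom for weak equivalences; this is where the thickness of $\mathcal{E}$ is essential. The strategy is to give an equivalent intrinsic characterization: a map $f:X\to Y$ is a weak equivalence iff, in any (and equivalently, some) factorization $f = p\circ i$ with $i$ a cofibration and $p$ a fibration, the middle object $M$ sits in a short exact sequence $0\to\ker(p)\to M\to\text{image}\to 0$ whose data forces the relevant cokernels and kernels to land in $\mathcal{E}$. Independence of the factorization is proved by a \textquotedblleft Ken Brown\textquotedblright-style argument: compare any two (cofibration, trivial fibration) factorizations via a common factorization through their pullback, and then apply thickness of $\mathcal{E}$ repeatedly in the resulting 3-term exact sequences to transport the $\mathcal{E}$-membership. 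Once one knows the characterization of weak equivalences is factorization-independent, 2-out-of-3 reduces to the following: given composable cofibrations with cokernels $A_1, A_2$ such that $A_1$ and the cokernel of the composite both lie in $\mathcal{E}$, the short exact sequence $0\to A_1\to(\text{composite cokernel})\to A_2\to 0$ together with thickness of $\mathcal{E}$ forces $A_2\in\mathcal{E}$, and dually for fibrations; the mixed cases are handled by combining a cofibration-fibration factorization with the previous two.

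Finally, uniqueness: any abelian model structure is determined by its pair of complete cotorsion pairs via the bijection (cofibrations $\leftrightarrow$ monos with cofibrant cokernel) and (fibrations $\leftrightarrow$ epis with fibrant kernel), so the three classes of our structure are forced, and the model structure itself is unique. I expect the 2-out-of-3 step, and specifically the factorization-independence of the definition of weak equivalence, to be the hardest technical point; everything else reduces cleanly to the cotorsion-pair orthogonality and the completeness hypothesis.
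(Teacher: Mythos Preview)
The paper does not prove this theorem: it is quoted from Hovey's paper \cite{Hovey2} as a black box and then applied. So there is no in-paper proof to compare your proposal against. Your outline is essentially the standard argument from Hovey's original paper, and it is correct in broad strokes: define the three classes as you do, obtain factorizations from completeness of the two cotorsion pairs via the usual pushout/pullback constructions, obtain liftings from $\mathrm{Ext}^1$-orthogonality, and deduce 2-out-of-3 from thickness of $\mathcal{E}$. One small caution: your factorization sketch (``pull back $0\to B\to A\to Y\to 0$ along $f$'') does not by itself give a monomorphism with cokernel in $\mathcal{A}$; the standard trick is rather to map $X\oplus A\to Y$ by $(f,\;\text{epi})$, note this is an epimorphism with kernel in $\mathcal{B}\cap\mathcal{E}$ (an extension of $B$ by $X$ need not work, so one instead uses the kernel description carefully, or dually pushes out along an $(\mathcal{A}\cap\mathcal{E})$-monic out of $X$). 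You should double-check that step when you write it out in full. Otherwise your identification of the 2-out-of-3 verification as the crux, and of thickness as the tool that drives it, is exactly right.
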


Cotorsion pairs of the form $(\mathcal{A}, \mathcal{B} \cap \mathcal{E})$ and $(\mathcal{A} \cap \mathcal{E}, \mathcal{B})$ are called {\bf compatible} by Gillespie in \cite{Gillespie}. We shall use the following result from \cite{Rada} to show that $({\rm dw}\widetilde{\mathcal{P}_n}, ({\rm dw}\widetilde{\mathcal{P}_n})^\perp)$ and $({\rm ex}\widetilde{\mathcal{P}_n}, ({\rm ex}\widetilde{\mathcal{P}_n})^\perp)$ are compatible cotorsion pairs. 

\begin{lemma}{\rm \cite[Lemma 5.3]{Rada}} If $(\mathcal{C}, \mathcal{D}')$ is a cotorsion pair and $(\mathcal{U, V})$ is a complete and hereditary cotorsion pair in $\Ch$ (i.e. $\mathcal{V}$ contains the injective complexes and it is closed under extensions and under taking cokernels of monomorphisms), and if $\mathcal{U} \subseteq \mathcal{C}$ then when $(\mathcal{C} \cap \mathcal{V})^\perp = \mathcal{D}$, we have $\mathcal{D}' = \mathcal{D} \cap \mathcal{V}$. \\
\end{lemma}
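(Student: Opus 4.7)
The plan is to establish the equality by proving the two inclusions $\mathcal{D}' \subseteq \mathcal{D} \cap \mathcal{V}$ and $\mathcal{D} \cap \mathcal{V} \subseteq \mathcal{D}'$ separately. The first of these is essentially formal. Since $(\mathcal{C}, \mathcal{D}')$ is a cotorsion pair, $\mathcal{D}' = \mathcal{C}^\perp$. From $\mathcal{C} \cap \mathcal{V} \subseteq \mathcal{C}$ I immediately get $\mathcal{D}' = \mathcal{C}^\perp \subseteq (\mathcal{C} \cap \mathcal{V})^\perp = \mathcal{D}$, while from the assumption $\mathcal{U} \subseteq \mathcal{C}$ together with $\mathcal{U}^\perp = \mathcal{V}$ I get $\mathcal{D}' \subseteq \mathcal{U}^\perp = \mathcal{V}$.

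For the reverse inclusion, fix an arbitrary $Y \in \mathcal{D} \cap \mathcal{V}$ and an arbitrary $C \in \mathcal{C}$; the goal becomes ${\rm Ext}^1(C, Y) = 0$. The idea is to replace $C$ by an object of $\mathcal{C} \cap \mathcal{V}$. Because $(\mathcal{U}, \mathcal{V})$ is complete it has enough injectives, so I can produce a short exact sequence
\[ 0 \longrightarrow C \longrightarrow V \longrightarrow U \longrightarrow 0 \]
with $V \in \mathcal{V}$ and $U \in \mathcal{U}$. Since $U \in \mathcal{U} \subseteq \mathcal{C}$ and the left class $\mathcal{C}$ of a cotorsion pair is closed under extensions, the middle term $V$ lies in $\mathcal{C}$ as well, hence in $\mathcal{C} \cap \mathcal{V}$.

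Now I apply ${\rm Hom}(-, Y)$ to the above sequence and extract the exact piece
\[ {\rm Ext}^1(V, Y) \longrightarrow {\rm Ext}^1(C, Y) \longrightarrow {\rm Ext}^2(U, Y). \]
The left-hand term vanishes because $V \in \mathcal{C} \cap \mathcal{V}$ and $Y \in \mathcal{D} = (\mathcal{C} \cap \mathcal{V})^\perp$. The right-hand term vanishes because $(\mathcal{U}, \mathcal{V})$ is hereditary: since $\mathcal{V}$ contains the injective complexes and is closed under cokernels of monomorphisms, dimension shifting along an injective resolution of $Y$ shows that ${\rm Ext}^i(U', V') = 0$ for every $U' \in \mathcal{U}$, $V' \in \mathcal{V}$, and $i \geq 1$. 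Hence ${\rm Ext}^1(C, Y) = 0$, so $Y \in \mathcal{C}^\perp = \mathcal{D}'$.

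The main subtlety is the interplay between the three hypotheses on $(\mathcal{U}, \mathcal{V})$: completeness supplies a short exact sequence with one end in $\mathcal{V}$; the inclusion $\mathcal{U} \subseteq \mathcal{C}$ combined with closure of $\mathcal{C}$ under extensions pushes the middle term into $\mathcal{C} \cap \mathcal{V}$; and heredity is precisely what kills the unwanted ${\rm Ext}^2$ term that this dimension shift introduces. Each hypothesis is used in an essential way and dropping any one of them appears to break the argument, which is what makes this combinatorial interplay the heart of the proof.
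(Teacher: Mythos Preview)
The paper does not provide a proof of this lemma; it merely cites it as \cite[Lemma 5.3]{Rada} and immediately applies it. So there is no ``paper's own proof'' to compare against.

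That said, your argument is correct and is in fact the standard way to prove this type of result. The inclusion $\mathcal{D}' \subseteq \mathcal{D} \cap \mathcal{V}$ is formal, as you note. For the reverse inclusion, your use of the special $\mathcal{V}$-preenvelope $0 \to C \to V \to U \to 0$ together with closure of $\mathcal{C}$ under extensions to force $V \in \mathcal{C} \cap \mathcal{V}$, followed by the long exact sequence in ${\rm Ext}(-,Y)$, is exactly right. The one point worth making slightly more explicit is the dimension-shifting step for ${\rm Ext}^2(U,Y)$: choosing an embedding $0 \to Y \to I \to Y' \to 0$ with $I$ injective, you need both that $I \in \mathcal{V}$ (which follows from the hypothesis that $\mathcal{V}$ contains the injectives) and that $Y' \in \mathcal{V}$ (which follows from closure under cokernels of monomorphisms, applied to the mono $Y \hookrightarrow I$ with both $Y, I \in \mathcal{V}$); then ${\rm Ext}^2(U,Y) \cong {\rm Ext}^1(U,Y') = 0$. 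You have this, but spelling out that both $I$ and $Y'$ land in $\mathcal{V}$ would make the role of each hypothesis on $\mathcal{V}$ completely transparent.
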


In the previous lemma, put $(\mathcal{C}, \mathcal{D}') = ({\rm dw}\widetilde{\mathcal{A}_n}, ({\rm dw}\widetilde{\mathcal{A}_n})^\perp)$ and $(\mathcal{U}, \mathcal{V}) = (\mbox{}^\perp\mathcal{E}, \mathcal{E})$. In \cite[Lemma 5.1]{Rada} it is proven that $(\mbox{}^\perp\mathcal{E}, \mathcal{E})$ is cogenerated by a set, so it is complete. In \cite{Xu}, it is proven that $\mbox{}^\perp\mathcal{E}$ is the class of dg-projective complexes, where a complex $X$ is dg-projective if $X_m \in \mathcal{P}_0$ for every $m \in \mathbb{Z}$ and every map $X \longrightarrow Y$ is nullhomotopic whenever $Y$ is exact. Hence, it is clear that $\mbox{}^\perp\mathcal{E} \subseteq {\rm dw}\widetilde{\mathcal{A}_n}$, i.e. $\mathcal{U} \subseteq \mathcal{C}$. Setting $\mathcal{D} = (\mathcal{C} \cap \mathcal{V})^\perp = ({\rm dw}\widetilde{\mathcal{A}_n} \cap \mathcal{E})^\perp$, we have $({\rm dw}\widetilde{\mathcal{A}_n})^\perp = ({\rm dw}\widetilde{\mathcal{A}_n} \cap \mathcal{E})^\perp \cap \mathcal{E}$. So we obtain \\
\begin{align*}
({\rm dw}\widetilde{\mathcal{A}_n}, ({\rm dw}\widetilde{\mathcal{A}_n})^\perp) & = ({\rm dw}\widetilde{\mathcal{A}_n}, ({\rm dw}\widetilde{\mathcal{A}_n} \cap \mathcal{E})^\perp \cap \mathcal{E}), \\
({\rm ex}\widetilde{\mathcal{A}_n}, ({\rm ex}\widetilde{\mathcal{A}_n})^\perp) & = ({\rm dw}\widetilde{\mathcal{A}_n} \cap \mathcal{E}, ({\rm dw}\widetilde{\mathcal{A}_n} \cap \mathcal{E})^\perp). \\
\end{align*}

From Theorem \ref{hovey} and the previous equalities, we have:
\begin{corollary} There exists a unique abelian model structure in $\Ch$ such that ${\rm dw}\widetilde{\mathcal{A}_n}$ is the class of cofibrant objects, $({\rm ex}\widetilde{\mathcal{A}_n})^\perp$ is the class of fibrant objects, and $\mathcal{E}$ is the class of trivial objects. In the case $\mathcal{A} = \mathcal{P}_0$ we name this structure the {\bf dw-$n$-projective model structure}, and when $\mathcal{A} = \mathcal{F}_0$ we name it the {\bf dw-$n$-flat model structure}. 
\end{corollary}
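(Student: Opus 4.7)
The plan is to apply Hovey's Theorem \ref{hovey} directly to the two cotorsion pairs $({\rm dw}\widetilde{\mathcal{A}_n}, ({\rm dw}\widetilde{\mathcal{A}_n})^\perp)$ and $({\rm ex}\widetilde{\mathcal{A}_n}, ({\rm ex}\widetilde{\mathcal{A}_n})^\perp)$ after rewriting them in the shape that Hovey's theorem demands. Set $\mathcal{A} := {\rm dw}\widetilde{\mathcal{A}_n}$ and $\mathcal{B} := ({\rm ex}\widetilde{\mathcal{A}_n})^\perp = ({\rm dw}\widetilde{\mathcal{A}_n} \cap \mathcal{E})^\perp$. The two pair identities proved immediately above the corollary state exactly that
\[
({\rm dw}\widetilde{\mathcal{A}_n}, ({\rm dw}\widetilde{\mathcal{A}_n})^\perp) = (\mathcal{A}, \mathcal{B} \cap \mathcal{E})
\quad\text{and}\quad
({\rm ex}\widetilde{\mathcal{A}_n}, ({\rm ex}\widetilde{\mathcal{A}_n})^\perp) = (\mathcal{A} \cap \mathcal{E}, \mathcal{B}),
\]
so the two pairs have precisely the ``compatible'' form required by Theorem \ref{hovey}.

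Next I would verify the three hypotheses of Hovey's theorem one by one. First, $\Ch$ is a bicomplete abelian category with enough projectives and injectives, which is standard. Second, both cotorsion pairs are complete: for $\mathcal{A} = \mathcal{P}_0$ this follows from Theorem \ref{eklof} together with the ${\rm dw}\widetilde{\mathcal{P}_n^{\aleph_0}}$-filtration theorem of Section 3 and the ${\rm ex}\widetilde{\mathcal{P}_n^{\leq \kappa}}$-filtration theorem of Section 4 (invoking Proposition \ref{propo}), and for $\mathcal{A} = \mathcal{F}_0$ the staircase zig-zag Theorem \ref{metodo2} of Section 5 (together with its obvious non-exact counterpart) provides the ${\rm dw}\widetilde{\mathcal{A}^{\leq \kappa}_n}$- and ${\rm ex}\widetilde{\mathcal{A}^{\leq \kappa}_n}$-filtrations, and again Theorem \ref{eklof} and Proposition \ref{propo} give completeness. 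Third, the class $\mathcal{E}$ of exact complexes is thick, since it is closed under retracts and satisfies the two-out-of-three property in short exact sequences by the long exact homology sequence.

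With all three hypotheses in place, Theorem \ref{hovey} yields a unique abelian model structure on $\Ch$ in which the cofibrant objects are exactly $\mathcal{A} = {\rm dw}\widetilde{\mathcal{A}_n}$, the fibrant objects are exactly $\mathcal{B} = ({\rm ex}\widetilde{\mathcal{A}_n})^\perp$, and the trivial objects are exactly $\mathcal{E}$. Naming this structure the dw-$n$-projective model structure when $\mathcal{A} = \mathcal{P}_0$ and the dw-$n$-flat model structure when $\mathcal{A} = \mathcal{F}_0$ is then a matter of definition.

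There is essentially no obstacle here beyond correctly quoting the preceding results; the only point that requires a small remark is that in the projective case one needs $R$ right noetherian (as used in Lemma \ref{Kaplansky}) whereas the flat case via the staircase zig-zag does not, but both flavours of completeness were already established in the paper, so this is not a new difficulty.
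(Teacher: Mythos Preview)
Your proposal is correct and follows essentially the same route as the paper: the text immediately preceding the corollary derives the compatibility identities via \cite[Lemma 5.3]{Rada}, and the corollary is then obtained by feeding those identities, the completeness results of Sections 3--5, and the thickness of $\mathcal{E}$ into Theorem \ref{hovey}. Your write-up simply unpacks in more detail what the paper compresses into the single line ``From Theorem \ref{hovey} and the previous equalities, we have''.
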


\begin{remark} As far as the author knows, the dw-projective model structure first appeared in \cite{Rada}, while the dw-flat model did in \cite{Gillespie}.
\end{remark}


\end{document}